\newtheorem{theorem}{Theorem}[section]
\newtheorem{proposition}[theorem]{Proposition}
\newtheorem{corollary}[theorem]{Corollary}
\theoremstyle{definition}
\newtheorem{example}{Example}[section]
\newtheorem*{remark}{Remark}
\numberwithin{equation}{section}
\newcommand{\FF}{\mathbb{F}}
\newcommand{\CC}{\mathbb{C}}
\newcommand{\ZZ}{\mathbb{Z}}
\newcommand{\GL}{\mathrm{GL}} 
\newcommand{\Res}{\mathrm{Res}} 
\newcommand{\Inf}{\mathrm{Inf}}
\newcommand{\ch}{\mathrm{ch}}
\newcommand{\UT}{\mathrm{UT}}
\newcommand{\nst}{\mathrm{nst}}
\newcommand{\rnode}{\mathrm{rnode}}
\newcommand{\spanning}{\text{-span}}
\newcommand{\dimv}{\mathrm{dimv}}
\newcommand{\cX}{\mathcal{X}}
\newcommand{\cS}{\mathcal{S}}
\newcommand{\cK}{\mathcal{K}}
\newcommand{\NCSym}{\mathrm{NCSym}}
\newcommand{\SC}{\mathrm{SC}}
\newcommand{\dd}{\displaystyle}
\newcommand{\scs}{\scriptstyle}
\newcommand{\larc}[1]{\hspace{-.4ex}\overset{#1}{\frown}\hspace{-.4ex}}
\newcommand{\slarc}[1]{\overset{#1}{\frown}}
\def\adots{\mathinner{\mkern2mu\raise0pt\hbox{.}  
\mkern2mu\raise4pt\hbox{.}\mkern1mu
\raise7pt\vbox{\kern7pt\hbox{.}}\mkern1mu}}
\renewcommand{\@makefnmark}{\mbox{\textsuperscript{}}}
\begin{document}

\title[A supercharacter table decomposition]
{A supercharacter table decomposition via\\ power-sum symmetric functions}

\author{N. Bergeron}\address[Nantel Bergeron]
{Department of Mathematics and Statistics\\ York  University\\ To\-ron\-to, Ontario M3J 1P3\\ CANADA}
\email{bergeron@mathstat.yorku.ca}
\urladdr{http://www.math.yorku.ca/bergeron}

 \author{N. Thiem}\address[Nathaniel Thiem]
 {University of Colorado \textbf{Boulder}}
 \email{Nathaniel.Thiem@Colorado.EDU}

\date{\today}

\thanks{N. Bergeron is supported in part by CRC and NSERC}
\thanks{N. Thiem is supported in part by NSF FRG DMS-0854893}

\begin{abstract}
 We give an $LU$-decomposition of the supercharacter table of the group of $n\times n$ unipotent upper triangular matrices over $\FF_q$, 
into a lower-triangular matrix with entries in $\ZZ[q]$ and an upper-triangular matrix with entries in 
$\ZZ[q^{-1}]$. To this end we introduce a $q$ deformation of a new power-sum basis of the Hopf algebra of symmetric functions in noncommuting variables. The decomposition is obtained from the transition matrices between the supercharacter basis, the $q$-power-sum basis and the superclass basis.
This is  similar to the decomposition of the character table of the symmetric group $S_n$ given by the transition matrices between Schur functions, monomials and power-sums.

We deduce some combinatorial results associated to this decomposition. In particular we compute the determinant of the supercharacter table.

\end{abstract}

\maketitle
\noindent{\it \`A mon ami Christophe Reutenauer et aux bons moments pass\'es ensemble.
}

\section{Introduction}

It is well known (see \cite{Mcd}) that the representation theory of the symmetric groups is nicely encoded by the space of symmetric functions (in countably many commuting variables). In fact the interplay between the character theory of the symmetric groups and symmetric functions has enriched both theories with very interesting combinatorics. The space of symmetric functions has several algebraic operations (in particular it is a Hopf algebra) and many interesting bases (Schur, power-sum, monomial,  and homogeneous symmetric functions). 
The algebraic operations and bases can be lifted to the characters of the symmetric groups, and as such are meaningful representation theoretic operations and bases. The character table of the symmetric group is known to be the transition matrix between the Schur basis and the power-sum basis. A natural factorization of this matrix is obtained by using a third basis (the monomial basis). The transition matrix between Schur functions and monomials is unipotent lower triangular and the transition matrix between monomials and power-sums is upper triangular.

In a recent workshop at AIM \cite{AIM}, we showed that the supercharacter theory of the group of unipotent upper triangular matrices over a finite field $\FF_q$ is related to the Hopf algebra $\NCSym(X)$ of symmetric functions in noncommutative variables \cite{BRRZ, BZ,RS}. For $q=2$, the algebraic operations of $\NCSym(X)$ can be lifted to the supercharacter theory and have a representation theoretic meaning. 
This inspired us to seek a new basis of $\NCSym(X)$ that will allow a natural decomposition of the supercharacter table. 

To this end, we recall in Section~\ref{sec:superchar} some of the results of \cite{AIM}, and then adapt it to a coarser supercharacter theory that allows us to have an isomorphism to $\NCSym(X)$ valid for all $q$. The supercharacter table is given by the transition matrix between the supercharacter basis and the superclass basis.
In Section~\ref{sec:qpowersums} we introduce a $q$-deformation of a new power-sum basis (these power-sums were first introduced in \cite{ABT}). For each $q$, this will give us our desired factorization of the supercharacter table. In subsequent sections, we explore the sum of the consequences and related combinatorics. 
In particular we  compute the determinant of the character table.

\section{Preliminaries}\label{sec:superchar}

\subsection{Supercharacters}

A \emph{supercharacter theory} of a finite group $G$ is a pair $(\cK,\cX)$ where $\cK$ is a partition of $G$ such that 
$$\CC\spanning\bigg\{\sum_{g\in K}g\mid K\in \cK\bigg\}$$
is a dimension $|\cK|$ subalgebra of $Z(\CC G)$  under usual group algebra multiplication and $\cX$ is a partition of the irreducible characters of $G$ such that $|\cX|=|\cK|$ and 
\begin{equation}\label{KisX}
\SC(G)=\left\{f:G\rightarrow \CC\ \bigg|\ \begin{array}{@{}l@{}} f\text{ constant on}\\ \text{the parts of $\cK$}\end{array}\right\}=\CC\spanning\bigg\{ \sum_{\psi\in X} \psi(1)\psi \mid X\in \cX\bigg\}
\end{equation}

We will refer to the parts $K\in \cK$ as \emph{superclasses}; we fix a basis of $\SC(G)$ consisting of characters orthogonal with respect to the usual inner product on class functions, and refer to the elements of this basis as \emph{supercharacters}.

There are various natural supercharacter theories for the group
$$\UT_n(q)=\left\{\begin{array}{c}\text{$n\times n$ unipotent upper triangular }\\ \text{matrices over $\FF_q$}\end{array}\right\},$$
but for this paper, we are interested in the following theory.  Let $u,v\in \UT_n(q)$ be equivalent, if there exist $x,y\in \UT_n(q)$ and $t\in T_n(q)$ such that $u=xt(v-1)t^{-1}y+1$. Here  $T_n(q)\subseteq \GL_n(\FF_q)$ denotes the set of diagonal matrices with non-zero entries on the diagonal. We will let $\cK$ be the set of equivalence classes of this relation, giving half of our supercharacter theory. 

 It turns out that these superclasses are indexed by
 $$\cS_n=\{\text{set partitions of $\{1,2,\ldots, n\}$}\},$$
  where a \emph{set partition} $\lambda$ of $\{1,2,\ldots, n\}$ is a subset $\lambda\subseteq \{i\larc{}j\mid 1\leq i<j\leq n\}$ such that 
 $$i\larc{}k\in \lambda\qquad \text{implies}\qquad i\larc{}j,j\larc{}k\notin \lambda\qquad \text{for $i<j<k$.}$$

Instead of finding the corresponding partition $\cX$ of the irreducible characters of $\UT_n(q)$ (which is uniquely determined by $\cK$ via (\ref{KisX})), we will give our chosen set of supercharacters.  Note that since the supercharacters form a basis for $\SC(\UT_n(q))$, we have that they are also indexed by $\cS_n$.   Given $\lambda,\mu\in \cS_n$ with $u_\mu$ in the superclass corresponding to $\mu$, define $\chi^\lambda\in \SC(\UT_n(q))$ by
\begin{equation}\label{SupercharacterFormula}
\chi^\lambda(u_\mu)=\left\{\begin{array}{ll} \dd\frac{(q-1)^{|\lambda|-|\lambda\cap\mu|}q^{\dim(\lambda)-|\lambda|} (-1)^{|\lambda\cap\mu|}}{q^{\nst^\lambda_\mu}} & \begin{array}{@{}l} \text{if $i<j<k$ with $i\larc{}k\in \lambda$}\\  \text{implies $i\larc{}j,j\larc{}k\notin \mu$,}\end{array}\\  0 & \text{otherwise,}\end{array}\right.
\end{equation}
where 
\begin{align*}
\dim(\lambda) &= \sum_{i\slarc{} j\in \lambda} j-i,\\
\nst^\lambda_\mu &= \#\{i< j<k < l\mid i\larc{}l\in \lambda, j\larc{}k\in \mu\}.
\end{align*}

These superclass functions are characters, and they form a basis for $\SC(\UT_n(q))$ in this case.

\begin{remark}
The supercharacter theory defined in this paper is slightly coarser than the usual supercharacter theory used for $\UT_n(q)$ (for example, \cite{An95,DI}).  In the finer theory, we discard the conjugation action of $T$ in our equivalence relation.  However, these two supercharacter theories coincide when $q=2$.
\end{remark}

\begin{example}\label{SupercharacterTable}
For $n=4$, if $t=q-1$, then the supercharacter table is
$$
\begin{array}{c|ccccccccccccccc}
 & \begin{tikzpicture}[scale=.2]
	\foreach \x in {0,1,2,3}
		\node (\x) at (\x,0) [inner sep =-2pt] {$\scs\bullet$};
\end{tikzpicture} 
& 
\begin{tikzpicture}[scale=.2]
	\foreach \x in {0,1,2,3}
		\node (\x) at (\x,0) [inner sep =-2pt] {$\scs\bullet$};
	\draw (0) .. controls (.25,.75) and (.75,.75) .. (1);
\end{tikzpicture} &
\begin{tikzpicture}[scale=.2]
	\foreach \x in {0,1,2,3}
		\node (\x) at (\x,0) [inner sep =-2pt] {$\scs\bullet$};
	\draw (1) .. controls (1.25,.75) and (1.75,.75) .. (2);
\end{tikzpicture} & 
\begin{tikzpicture}[scale=.2]
	\foreach \x in {0,1,2,3}
		\node (\x) at (\x,0) [inner sep =-2pt] {$\scs\bullet$};
	\draw (2) .. controls (2.25,.75) and (2.75,.75) .. (3);
\end{tikzpicture} 
& 
\begin{tikzpicture}[scale=.2]
	\foreach \x in {0,1,2,3}
		\node (\x) at (\x,0) [inner sep =-2pt] {$\scs\bullet$};
	\draw (0) .. controls (0.25,.75) and (.75,.75) .. (1);
	\draw (1) .. controls (1.25,.75) and (1.75,.75) .. (2);
\end{tikzpicture} 
& 
\begin{tikzpicture}[scale=.2]
	\foreach \x in {0,1,2,3}
		\node (\x) at (\x,0) [inner sep =-2pt] {$\scs\bullet$};
	\draw (0) .. controls (0.25,.75) and (.75,.75) .. (1);
	\draw (2) .. controls (2.25,.75) and (2.75,.75) .. (3);
\end{tikzpicture} 
& 
\begin{tikzpicture}[scale=.2]
	\foreach \x in {0,1,2,3}
		\node (\x) at (\x,0) [inner sep =-2pt] {$\scs\bullet$};
	\draw (1) .. controls (1.25,.75) and (1.75,.75) .. (2);
	\draw (2) .. controls (2.25,.75) and (2.75,.75) .. (3);
\end{tikzpicture} 
& 
\begin{tikzpicture}[scale=.2]
	\foreach \x in {0,1,2,3}
		\node (\x) at (\x,0) [inner sep =-2pt] {$\scs\bullet$};
	\draw (0) .. controls (0.25,.75) and (.75,.75) .. (1);
	\draw (1) .. controls (1.25,.75) and (1.75,.75) .. (2);
	\draw (2) .. controls (2.25,.75) and (2.75,.75) .. (3);
\end{tikzpicture} 
&
\begin{tikzpicture}[scale=.2]
	\foreach \x in {0,1,2,3}
		\node (\x) at (\x,0) [inner sep =-2pt] {$\scs\bullet$};
	\draw (0) .. controls (.5,1.25) and (1.5,1.25) .. (2);
\end{tikzpicture} 
&
\begin{tikzpicture}[scale=.2]
	\foreach \x in {0,1,2,3}
		\node (\x) at (\x,0) [inner sep =-2pt] {$\scs\bullet$};
	\draw (1) .. controls (1.5,1.25) and (2.5,1.25) .. (3);
\end{tikzpicture} 
&
\begin{tikzpicture}[scale=.2]
	\foreach \x in {0,1,2,3}
		\node (\x) at (\x,0) [inner sep =-2pt] {$\scs\bullet$};
	\draw (0) .. controls (0.25,.75) and (.75,.75) .. (1);
	\draw (1) .. controls (1.5,1.25) and (2.5,1.25) .. (3);
\end{tikzpicture} 
&
\begin{tikzpicture}[scale=.2]
	\foreach \x in {0,1,2,3}
		\node (\x) at (\x,0) [inner sep =-2pt] {$\scs\bullet$};
	\draw (0) .. controls (.5,1.25) and (1.5,1.25) .. (2);
	\draw (2) .. controls (2.25,.75) and (2.75,.75) .. (3);
\end{tikzpicture} 
&
\begin{tikzpicture}[scale=.2]
	\foreach \x in {0,1,2,3}
		\node (\x) at (\x,0) [inner sep =-2pt] {$\scs\bullet$};
	\draw (0) .. controls (.5,1.25) and (1.5,1.25) .. (2);
	\draw (1) .. controls (1.5,1.25) and (2.5,1.25) .. (3);
\end{tikzpicture} 
&
\begin{tikzpicture}[scale=.2]
	\foreach \x in {0,1,2,3}
		\node (\x) at (\x,0) [inner sep =-2pt] {$\scs\bullet$};
	\draw (0) .. controls (.75,1.75) and (2.25,1.75) .. (3);
\end{tikzpicture} 
&
\begin{tikzpicture}[scale=.2]
	\foreach \x in {0,1,2,3}
		\node (\x) at (\x,0) [inner sep =-2pt] {$\scs\bullet$};
	\draw (0) .. controls (.75,1.75) and (2.25,1.75) .. (3);
	\draw (1) .. controls (1.25,.75) and (1.75,.75) .. (2);
\end{tikzpicture} \\ \hline
  \begin{tikzpicture}[scale=.2]
	\foreach \x in {0,1,2,3}
		\node (\x) at (\x,0) [inner sep =-2pt] {$\scs\bullet$};
\end{tikzpicture} 
& 1 & 1 & 1 & 1 & 1 & 1 & 1 & 1 & 1 & 1 & 1 & 1 & 1 & 1 & 1 \\
\begin{tikzpicture}[scale=.2]
	\foreach \x in {0,1,2,3}
		\node (\x) at (\x,0) [inner sep =-2pt] {$\scs\bullet$};
	\draw (0) .. controls (.25,.75) and (.75,.75) .. (1);
\end{tikzpicture} &
t & -1 & t & t & -1 & -1 & t & -1 & t & t & t & -1 &  t & t & t \\
\begin{tikzpicture}[scale=.2]
	\foreach \x in {0,1,2,3}
		\node (\x) at (\x,0) [inner sep =-2pt] {$\scs\bullet$};
	\draw (1) .. controls (1.25,.75) and (1.75,.75) .. (2);
\end{tikzpicture} &  t & t & -1 & t & -1 & t & -1 & -1 & t & t & t & t & t & t & -1\\
\begin{tikzpicture}[scale=.2]
	\foreach \x in {0,1,2,3}
		\node (\x) at (\x,0) [inner sep =-2pt] {$\scs\bullet$};
	\draw (2) .. controls (2.25,.75) and (2.75,.75) .. (3);
\end{tikzpicture} 
&  t & t & t & -1 & t & -1 & -1 & -1 & t & t & t & -1 & t & t & t\\
\begin{tikzpicture}[scale=.2]
	\foreach \x in {0,1,2,3}
		\node (\x) at (\x,0) [inner sep =-2pt] {$\scs\bullet$};
	\draw (0) .. controls (0.25,.75) and (.75,.75) .. (1);
	\draw (1) .. controls (1.25,.75) and (1.75,.75) .. (2);
\end{tikzpicture} 
&  t^2 & -t & -t & t^2 & 1 & -t & -t & 1 & t^2 & t^2 & -t & t^2 & t^2 & t^2 & -t\\ 
\begin{tikzpicture}[scale=.2]
	\foreach \x in {0,1,2,3}
		\node (\x) at (\x,0) [inner sep =-2pt] {$\scs\bullet$};
	\draw (0) .. controls (0.25,.75) and (.75,.75) .. (1);
	\draw (2) .. controls (2.25,.75) and (2.75,.75) .. (3);
\end{tikzpicture} 
& t^2 & -t & t^2 & -t & -t & 1 & -t & 1 & t^2 & t^2 & -t & -t & t^2 & t^2 & t^2  \\
\begin{tikzpicture}[scale=.2]
	\foreach \x in {0,1,2,3}
		\node (\x) at (\x,0) [inner sep =-2pt] {$\scs\bullet$};
	\draw (1) .. controls (1.25,.75) and (1.75,.75) .. (2);
	\draw (2) .. controls (2.25,.75) and (2.75,.75) .. (3);
\end{tikzpicture} 
& t^2 & t^2 & -t & -t & -t & -t & 1 & 1 & t^2 & t^2 & t^2 & -t & t^2 & t^2 & -t\\
\begin{tikzpicture}[scale=.2]
	\foreach \x in {0,1,2,3}
		\node (\x) at (\x,0) [inner sep =-2pt] {$\scs\bullet$};
	\draw (0) .. controls (0.25,.75) and (.75,.75) .. (1);
	\draw (1) .. controls (1.25,.75) and (1.75,.75) .. (2);
	\draw (2) .. controls (2.25,.75) and (2.75,.75) .. (3);
\end{tikzpicture} 
& t^3 & -t^2 & -t^2 & -t^2 & t & t & t & -1 & t^3 & t^3 & -t^2 & -t^2 & t^3 & t^3 & -t^2\\
\begin{tikzpicture}[scale=.2]
	\foreach \x in {0,1,2,3}
		\node (\x) at (\x,0) [inner sep =-2pt] {$\scs\bullet$};
	\draw (0) .. controls (.5,1.25) and (1.5,1.25) .. (2);
\end{tikzpicture} 
& tq & 0 & 0 & tq & 0 & 0 & 0 & 0 & -q & tq & 0 & -q & -q & tq & 0\\
\begin{tikzpicture}[scale=.2]
	\foreach \x in {0,1,2,3}
		\node (\x) at (\x,0) [inner sep =-2pt] {$\scs\bullet$};
	\draw (1) .. controls (1.5,1.25) and (2.5,1.25) .. (3);
\end{tikzpicture} 
& tq & tq & 0 & 0 & 0 & 0 & 0 & 0 & tq & -q & -q & 0 & -q & tq & 0\\
\begin{tikzpicture}[scale=.2]
	\foreach \x in {0,1,2,3}
		\node (\x) at (\x,0) [inner sep =-2pt] {$\scs\bullet$};
	\draw (0) .. controls (0.25,.75) and (.75,.75) .. (1);
	\draw (1) .. controls (1.5,1.25) and (2.5,1.25) .. (3);
\end{tikzpicture} 
& t^2q & -tq & 0 & 0 & 0 & 0 & 0 & 0 & t^2q & -tq & q & 0 & -tq & t^2q & 0\\
\begin{tikzpicture}[scale=.2]
	\foreach \x in {0,1,2,3}
		\node (\x) at (\x,0) [inner sep =-2pt] {$\scs\bullet$};
	\draw (0) .. controls (.5,1.25) and (1.5,1.25) .. (2);
	\draw (2) .. controls (2.25,.75) and (2.75,.75) .. (3);
\end{tikzpicture} 
& t^2q & 0 & 0 & -tq & 0 & 0 & 0 & 0 & -tq & t^2q & 0 & q & -tq & t^2 q & 0\\
\begin{tikzpicture}[scale=.2]
	\foreach \x in {0,1,2,3}
		\node (\x) at (\x,0) [inner sep =-2pt] {$\scs\bullet$};
	\draw (0) .. controls (.5,1.25) and (1.5,1.25) .. (2);
	\draw (1) .. controls (1.5,1.25) and (2.5,1.25) .. (3);
\end{tikzpicture} 
& t^2q^2 & 0 & 0 & 0 & 0 & 0 & 0 & 0 & -tq^2 & -tq^2 & 0 & 0 & q^2 & t^2q^2 & 0\\
\begin{tikzpicture}[scale=.2]
	\foreach \x in {0,1,2,3}
		\node (\x) at (\x,0) [inner sep =-2pt] {$\scs\bullet$};
	\draw (0) .. controls (.75,1.75) and (2.25,1.75) .. (3);
\end{tikzpicture} 
& tq^2 & 0 & tq & 0 & 0 & 0 & 0 & 0 & 0 & 0 & 0 & 0 & 0 & -q^2 & -q\\
\begin{tikzpicture}[scale=.2]
	\foreach \x in {0,1,2,3}
		\node (\x) at (\x,0) [inner sep =-2pt] {$\scs\bullet$};
	\draw (0) .. controls (.75,1.75) and (2.25,1.75) .. (3);
	\draw (1) .. controls (1.25,.75) and (1.75,.75) .. (2);
\end{tikzpicture}  & t^2 q^2 & 0 & -tq & 0 & 0 & 0 & 0 & 0 & 0 & 0 & 0 & 0 & 0 & -tq^2 & q
\end{array}
$$
\end{example}

\subsection{Hopf algebra of supercharacters}

Let 
$$\SC(q)=\bigoplus_{n\geq 0} \SC(\UT_n(q)),$$
where by convention we let 
$$\SC(\UT_0(q))=\CC\spanning\{\chi^{\emptyset_0}\},$$
where $\emptyset_0$ is the empty set partition of the set with 0 elements.  Define a product on $\SC(q)$ by
$$\chi_m\cdot \chi_n= \Inf_{\UT_m(q)\times \UT_n(q)}^{\UT_{m+n}(q)}(\chi_m\times \chi_n) = (\chi_m\times \chi_n)\circ \pi,$$
where $\chi_m\in \SC(\UT_m(q))$, $\chi_n\in \SC(\UT_n(q))$, and $\Inf$ is the inflation functor coming from the quotient map
$$\pi\colon \UT_{m+n}(q)\longrightarrow \left[\begin{array}{c|c} \UT_m(q) & 0\\ \hline 0 & \UT_n(q)\end{array}\right] \cong  \UT_m(q) \times \UT_n(q).$$
Define a coproduct on $\SC(q)$ by
$$\Delta(\chi_n)=\sum_{\{1,2,\ldots,n\}=J\sqcup K} \Res^{\UT_n(q)}_{\UT_J(q)}(\chi_n)\otimes  \Res^{\UT_n(q)}_{\UT_K(q)}(\chi_n),$$
where $\UT_J(q)$ is the subgroup of $\UT_n(q)$ with nonzero entries above the diagonal only in rows and columns in $J$.  We make use of the isomorphism $\UT_J(q)\cong \UT_{|J|}(q)$ in this definition.

This product and coproduct give rise to a graded Hopf algebra, and this algebra comes equipped with two distinguished bases:
\begin{align*}
\SC(q) & = \CC\spanning\{\chi^\lambda\mid \lambda\in \cS_n, n\in \ZZ_{\geq 0}\}\\
& = \CC\spanning\{\kappa_\mu\mid \mu\in \cS_n, n\in \ZZ_{\geq 0}\},
\end{align*}
where for $u\in \UT_n(q)$,
$$\kappa_\mu(u)=\left\{\begin{array}{ll} 1 & \text{if $u$ is in the superclass indexed by $\mu$,}\\ 0  & \text{otherwise.}\end{array}\right.$$

An American Institute of Mathematics workshop showed that we are already familiar with this Hopf algebra. 

\begin{theorem}[\cite{AIM}]
The Hopf algebra $\SC(q)$ is isomorphic  to the Hopf algebra of symmetric functions in non-commuting variables $\NCSym(X)$.
\end{theorem}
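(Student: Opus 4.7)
My plan is to construct the isomorphism $\phi \colon \SC(q) \to \NCSym(X)$ explicitly on the superclass basis, sending $\kappa_\mu$ to a basis element of $\NCSym(X)$ indexed by the same set partition $\mu$ (after fixing a convenient basis of $\NCSym(X)$, up to a normalizing scalar chosen so that structure constants agree). Since both sides are graded by $n$ with graded dimension equal to $|\cS_n|$, this is automatically a graded linear isomorphism, and the work is to verify that $\phi$ respects product and coproduct.

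For the product, I would unwind $\kappa_\mu \cdot \kappa_\nu = (\kappa_\mu\times \kappa_\nu)\circ \pi$ directly. The pullback along $\pi$ of the indicator of the product superclass is $1$ on $u\in \UT_{m+n}(q)$ exactly when the two diagonal blocks of $u$ lie in the superclasses indexed by $\mu$ and $\nu$ respectively; because the equivalence relation defining the (coarser) superclasses allows left and right multiplication by arbitrary elements of $\UT_{m+n}(q)$ together with conjugation by $T_{m+n}(q)$, one can absorb any off-block entries. Hence the pullback equals $\kappa_{\mu\sqcup \nu[m]}$, where $\nu[m]$ shifts the endpoints of the arcs of $\nu$ by $m$. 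This matches the concatenation-type product on the chosen basis of $\NCSym(X)$.

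For the coproduct, I would analyze $\Res^{\UT_n(q)}_{\UT_J(q)}(\kappa_\mu)$ directly: it is again an indicator function, vanishing unless every arc of $\mu$ has both endpoints in $J$, in which case it equals $\kappa_{\st(\mu\cap J)}$ under the standardization isomorphism $\UT_J(q)\cong \UT_{|J|}(q)$. Summing over set compositions $\{1,\dots,n\}= J\sqcup K$ that separate the arcs of $\mu$ then reproduces exactly the splitting coproduct on $\NCSym(X)$. Since both graded bialgebras are connected, the antipodes are uniquely determined by the bialgebra structure, so the bialgebra isomorphism $\phi$ is automatically a Hopf algebra isomorphism.

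The main obstacle is the combinatorial bookkeeping on the coproduct side: one must verify carefully that restriction of a superclass indicator to $\UT_J(q)$ really yields (the pullback of) another superclass indicator, and that arcs of $\mu$ straddling $J$ and $K$ force the restriction to vanish. This is the place where the coarser supercharacter theory (with the full $T_n(q)$ conjugation action included in the equivalence) is essential: it is what makes the restriction picture uniform in $q$ and eliminates the scalar parameters that would otherwise survive off the diagonal blocks, so that the same combinatorial isomorphism with $\NCSym(X)$ works for every prime power $q$ rather than only for $q=2$.
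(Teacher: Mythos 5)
Your overall strategy --- send $\kappa_\mu\mapsto m_\mu$ and check compatibility with product and coproduct --- is exactly the isomorphism $\ch$ the paper attributes to \cite{AIM} (the paper itself only cites the result and observes that the argument extends from $q=2$ to general $q$ for the coarser theory). Your coproduct analysis and your closing remarks about why the $T_n(q)$-conjugation is what makes the indexing by $\cS_n$ uniform in $q$ are essentially right. But there is a genuine error in your product step. The inflation $(\kappa_\mu\times\kappa_\nu)\circ\pi$ is the indicator function of the full preimage $\pi^{-1}(K_\mu\times K_\nu)$, and this preimage is \emph{not} a single superclass: you cannot ``absorb the off-block entries,'' because changing the upper-right $m\times n$ block genuinely changes the superclass. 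The simplest counterexample is $m=n=1$: here $\kappa_{\emptyset_1}\cdot\kappa_{\emptyset_1}$ is the constant function $1$ on $\UT_2(q)$, which is $\kappa_{\emptyset_2}+\kappa_{1\larc{}2}$, not $\kappa_{\emptyset_2}$. In general $\kappa_\mu\cdot\kappa_\nu=\sum_\lambda\kappa_\lambda$, summed over all $\lambda\in\cS_{m+n}$ whose arcs supported in $\{1,\dots,m\}$ are exactly $\mu$ and whose arcs supported in $\{m+1,\dots,m+n\}$ are exactly the shift $\nu[m]$, with arbitrary crossing arcs allowed.

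This matters because the concatenation product you assert does not match the product of monomial symmetric functions in noncommuting variables: $m_{\emptyset_1}^2=\sum_{i,j}X_iX_j=m_{\emptyset_2}+m_{1\larc{}2}$, which agrees with the \emph{correct} superclass product above, not with yours. So as written your two verifications are checking compatibility with two different (and mutually inconsistent, relative to the basis $\{m_\mu\}$) structures on $\NCSym(X)$; the hedge ``after fixing a convenient basis, up to a normalizing scalar'' cannot repair this, since a basis with pure concatenation product would not have the monomial-type coproduct you derive on the restriction side. The fix is to redo the product computation honestly: identify which superclasses $K_\lambda$ of $\UT_{m+n}(q)$ meet $\pi^{-1}(K_\mu\times K_\nu)$ (equivalently, compute the superclass of the representative $u_\lambda$ after zeroing its off-diagonal block), obtain the sum displayed above, and then observe that these are precisely the structure constants of $m_\mu m_\nu$ in $\NCSym(X)$.
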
 

\begin{remark}
The paper \cite{AIM} actually only addresses the case when $q=2$ since that paper was using a finer supercharacter theory, but the proof for arbitrary $q$ in our current supercharacter theory follows by the same argument.  In fact, if we work purely combinatorially and ignore the representation theory, then $\SC(q)$ makes sense for arbitrary $q$.  In particular, we also get an interesting isomorphism in the case when $q=1$.
\end{remark}

Given an infinite set $X=\{X_1,X_2,\ldots\}$ of noncommuting variables, the algebra $\NCSym(X)$ has a distinguished basis of monomial symmetric functions given by
$$\{m_\mu=\sum_{(i_1,i_2,\ldots, i_n)\in O_\mu} X_{i_1}X_{i_2}\cdots X_{i_n}\mid \mu\in \cS_n,n\in \ZZ_{\geq 0}\},$$
where 
$$O_{\mu}=\{(i_1,\ldots,i_n)\in \ZZ_{\geq 1}^n\mid i_k=i_l\text{ if and only if $k$ and $l$ are in the same part of $\mu$}\},$$
and the \emph{parts} of $\mu$ are given by the transitive closure of the relation $i\sim j$ if $i\larc{}j\in \mu$ or $j\larc{}i\in \mu$. 

We will be interested in a second natural basis of $\NCSym(X)$, which is a slight variation on what is usual in the literature \cite{BZ,RS}.  Consider the power-sum symmetric functions,
$$p_\nu=\sum_{\mu\supseteq \nu} m_\mu.$$
The usual definition of $p_\nu$ uses the refinement order on set partitions rather than the subset relation in our definition.  There are several consequences from the fact that we have a different order:
\begin{enumerate}
\item[(a)]  The sums of monomial symmetric functions have fewer terms,
\item[(b)]  If we consider the function $\mathrm{NCSym(X)}\rightarrow \mathrm{Sym}(X)$ induced by allowing the variables to commute, not all the $p_\nu$ get sent to the corresponding power-sum symmetric functions (as the usual ones do).  However, if $\nu$ satisfies $i\larc{}j\in \nu$ implies $j-i=1$, then $p_\nu$ will be sent to the appropriate symmetric function.  That is, in the usual construction the image of $p_\nu$ depends on the sequence of part sizes, and in ours each $p_\nu$ gets sent to something different.
\end{enumerate}

The isomorphism 
$$\mathrm{ch}:\mathrm{SC}(q)\longrightarrow \mathrm{NCSym(X)}$$
 given in \cite{AIM} sends $\kappa_\mu$ to $m_\mu$, but there is no representation theoretic interpretation for the power-sum symmetric functions.  This paper finds a representation theoretic approach by tweaking the definition of the power-sum symmetric functions.

\section{Transition matrices} \label{sec:qpowersums}

This section defines a $q$-analogue of the power-sum symmetric functions, and studies its transition matrices to the superclass function basis and the supercharacter basis.

\subsection{$q$-deformations of power-sums in $\SC(q)$}

For $\nu\in \cS_n$, define 
$$\rho_\nu(q)=\sum_{\mu\supseteq \nu} \frac{1}{q^{\nst_{\mu-\nu}^\nu}} \kappa_\mu,$$
so that formally $\ch(\rho_\nu(1))=p_\nu$. 

\begin{example} The transition matrix from the $\rho$-basis to the $\kappa$-basis is given by
$$
\begin{array}{c|ccccccccccccccc}
 & \begin{tikzpicture}[scale=.2]
	\foreach \x in {0,1,2,3}
		\node (\x) at (\x,0) [inner sep =-2pt] {$\scs\bullet$};
\end{tikzpicture} 
& 
\begin{tikzpicture}[scale=.2]
	\foreach \x in {0,1,2,3}
		\node (\x) at (\x,0) [inner sep =-2pt] {$\scs\bullet$};
	\draw (0) .. controls (.25,.75) and (.75,.75) .. (1);
\end{tikzpicture} &
\begin{tikzpicture}[scale=.2]
	\foreach \x in {0,1,2,3}
		\node (\x) at (\x,0) [inner sep =-2pt] {$\scs\bullet$};
	\draw (1) .. controls (1.25,.75) and (1.75,.75) .. (2);
\end{tikzpicture} & 
\begin{tikzpicture}[scale=.2]
	\foreach \x in {0,1,2,3}
		\node (\x) at (\x,0) [inner sep =-2pt] {$\scs\bullet$};
	\draw (2) .. controls (2.25,.75) and (2.75,.75) .. (3);
\end{tikzpicture} 
& 
\begin{tikzpicture}[scale=.2]
	\foreach \x in {0,1,2,3}
		\node (\x) at (\x,0) [inner sep =-2pt] {$\scs\bullet$};
	\draw (0) .. controls (0.25,.75) and (.75,.75) .. (1);
	\draw (1) .. controls (1.25,.75) and (1.75,.75) .. (2);
\end{tikzpicture} 
& 
\begin{tikzpicture}[scale=.2]
	\foreach \x in {0,1,2,3}
		\node (\x) at (\x,0) [inner sep =-2pt] {$\scs\bullet$};
	\draw (0) .. controls (0.25,.75) and (.75,.75) .. (1);
	\draw (2) .. controls (2.25,.75) and (2.75,.75) .. (3);
\end{tikzpicture} 
& 
\begin{tikzpicture}[scale=.2]
	\foreach \x in {0,1,2,3}
		\node (\x) at (\x,0) [inner sep =-2pt] {$\scs\bullet$};
	\draw (1) .. controls (1.25,.75) and (1.75,.75) .. (2);
	\draw (2) .. controls (2.25,.75) and (2.75,.75) .. (3);
\end{tikzpicture} 
& 
\begin{tikzpicture}[scale=.2]
	\foreach \x in {0,1,2,3}
		\node (\x) at (\x,0) [inner sep =-2pt] {$\scs\bullet$};
	\draw (0) .. controls (0.25,.75) and (.75,.75) .. (1);
	\draw (1) .. controls (1.25,.75) and (1.75,.75) .. (2);
	\draw (2) .. controls (2.25,.75) and (2.75,.75) .. (3);
\end{tikzpicture} 
&
\begin{tikzpicture}[scale=.2]
	\foreach \x in {0,1,2,3}
		\node (\x) at (\x,0) [inner sep =-2pt] {$\scs\bullet$};
	\draw (0) .. controls (.5,1.25) and (1.5,1.25) .. (2);
\end{tikzpicture} 
&
\begin{tikzpicture}[scale=.2]
	\foreach \x in {0,1,2,3}
		\node (\x) at (\x,0) [inner sep =-2pt] {$\scs\bullet$};
	\draw (1) .. controls (1.5,1.25) and (2.5,1.25) .. (3);
\end{tikzpicture} 
&
\begin{tikzpicture}[scale=.2]
	\foreach \x in {0,1,2,3}
		\node (\x) at (\x,0) [inner sep =-2pt] {$\scs\bullet$};
	\draw (0) .. controls (0.25,.75) and (.75,.75) .. (1);
	\draw (1) .. controls (1.5,1.25) and (2.5,1.25) .. (3);
\end{tikzpicture} 
&
\begin{tikzpicture}[scale=.2]
	\foreach \x in {0,1,2,3}
		\node (\x) at (\x,0) [inner sep =-2pt] {$\scs\bullet$};
	\draw (0) .. controls (.5,1.25) and (1.5,1.25) .. (2);
	\draw (2) .. controls (2.25,.75) and (2.75,.75) .. (3);
\end{tikzpicture} 
&
\begin{tikzpicture}[scale=.2]
	\foreach \x in {0,1,2,3}
		\node (\x) at (\x,0) [inner sep =-2pt] {$\scs\bullet$};
	\draw (0) .. controls (.5,1.25) and (1.5,1.25) .. (2);
	\draw (1) .. controls (1.5,1.25) and (2.5,1.25) .. (3);
\end{tikzpicture} 
&
\begin{tikzpicture}[scale=.2]
	\foreach \x in {0,1,2,3}
		\node (\x) at (\x,0) [inner sep =-2pt] {$\scs\bullet$};
	\draw (0) .. controls (.75,1.75) and (2.25,1.75) .. (3);
\end{tikzpicture} 
&
\begin{tikzpicture}[scale=.2]
	\foreach \x in {0,1,2,3}
		\node (\x) at (\x,0) [inner sep =-2pt] {$\scs\bullet$};
	\draw (0) .. controls (.75,1.75) and (2.25,1.75) .. (3);
	\draw (1) .. controls (1.25,.75) and (1.75,.75) .. (2);
\end{tikzpicture} \\ \hline
  \begin{tikzpicture}[scale=.2]
	\foreach \x in {0,1,2,3}
		\node (\x) at (\x,0) [inner sep =-2pt] {$\scs\bullet$};
\end{tikzpicture} 
& 1 & 1 & 1 & 1 & 1 & 1 & 1 & 1 & 1 & 1 & 1 & 1 & 1 & 1 & 1 \\
\begin{tikzpicture}[scale=.2]
	\foreach \x in {0,1,2,3}
		\node (\x) at (\x,0) [inner sep =-2pt] {$\scs\bullet$};
	\draw (0) .. controls (.25,.75) and (.75,.75) .. (1);
\end{tikzpicture} &
0 & 1 & 0 & 0 & 1 & 1 & 0 & 1 & 0 & 0 & 1 & 0 &  0 & 0 & 0 \\
\begin{tikzpicture}[scale=.2]
	\foreach \x in {0,1,2,3}
		\node (\x) at (\x,0) [inner sep =-2pt] {$\scs\bullet$};
	\draw (1) .. controls (1.25,.75) and (1.75,.75) .. (2);
\end{tikzpicture} &  0 & 0 & 1 & 0 & 1 & 0 & 1 & 1 & 0 & 0  & 0 & 0 & 0 & 0 & 1\\
\begin{tikzpicture}[scale=.2]
	\foreach \x in {0,1,2,3}
		\node (\x) at (\x,0) [inner sep =-2pt] {$\scs\bullet$};
	\draw (2) .. controls (2.25,.75) and (2.75,.75) .. (3);
\end{tikzpicture} 
&  0 & 0 & 0  & 1 & 0 & 1 & 1 & 1 & 0 &  0 & 0 & 1 & 0 & 0 & 0\\
\begin{tikzpicture}[scale=.2]
	\foreach \x in {0,1,2,3}
		\node (\x) at (\x,0) [inner sep =-2pt] {$\scs\bullet$};
	\draw (0) .. controls (0.25,.75) and (.75,.75) .. (1);
	\draw (1) .. controls (1.25,.75) and (1.75,.75) .. (2);
\end{tikzpicture} 
& 0 &  0 & 0 & 0  & 1 & 0 & 0 & 1 & 0 & 0 &  0 & 0 & 0 & 0 & 0 \\ 
\begin{tikzpicture}[scale=.2]
	\foreach \x in {0,1,2,3}
		\node (\x) at (\x,0) [inner sep =-2pt] {$\scs\bullet$};
	\draw (0) .. controls (0.25,.75) and (.75,.75) .. (1);
	\draw (2) .. controls (2.25,.75) and (2.75,.75) .. (3);
\end{tikzpicture} 
& 0 & 0 &  0 & 0 & 0  & 1 & 0 & 1 & 0 & 0 &  0 & 0 & 0 & 0 & 0   \\
\begin{tikzpicture}[scale=.2]
	\foreach \x in {0,1,2,3}
		\node (\x) at (\x,0) [inner sep =-2pt] {$\scs\bullet$};
	\draw (1) .. controls (1.25,.75) and (1.75,.75) .. (2);
	\draw (2) .. controls (2.25,.75) and (2.75,.75) .. (3);
\end{tikzpicture} 
& 0 & 0 & 0 &  0 & 0 & 0 & 1 & 1  & 0 & 0 &  0 & 0 & 0 & 0 & 0 \\
\begin{tikzpicture}[scale=.2]
	\foreach \x in {0,1,2,3}
		\node (\x) at (\x,0) [inner sep =-2pt] {$\scs\bullet$};
	\draw (0) .. controls (0.25,.75) and (.75,.75) .. (1);
	\draw (1) .. controls (1.25,.75) and (1.75,.75) .. (2);
	\draw (2) .. controls (2.25,.75) and (2.75,.75) .. (3);
\end{tikzpicture} 
& 0 & 0 & 0 & 0 &  0 & 0 & 0 & 1   & 0 & 0 &  0 & 0 & 0 & 0 & 0 \\
\begin{tikzpicture}[scale=.2]
	\foreach \x in {0,1,2,3}
		\node (\x) at (\x,0) [inner sep =-2pt] {$\scs\bullet$};
	\draw (0) .. controls (.5,1.25) and (1.5,1.25) .. (2);
\end{tikzpicture} 
& 0 & 0 & 0 & 0 & 0 &  0 & 0 & 0 & 1    & 0 &  0 & 1 & 1 & 0 & 0\\
\begin{tikzpicture}[scale=.2]
	\foreach \x in {0,1,2,3}
		\node (\x) at (\x,0) [inner sep =-2pt] {$\scs\bullet$};
	\draw (1) .. controls (1.5,1.25) and (2.5,1.25) .. (3);
\end{tikzpicture} 
& 0 & 0 & 0 & 0 & 0 & 0 &  0 & 0 & 0 & 1    & 1 &   0 & 1 & 0 & 0\\
\begin{tikzpicture}[scale=.2]
	\foreach \x in {0,1,2,3}
		\node (\x) at (\x,0) [inner sep =-2pt] {$\scs\bullet$};
	\draw (0) .. controls (0.25,.75) and (.75,.75) .. (1);
	\draw (1) .. controls (1.5,1.25) and (2.5,1.25) .. (3);
\end{tikzpicture} 
& 0 & 0 & 0 & 0 & 0 & 0 & 0 &  0 & 0 & 0 & 1    &   0 & 0 & 0 & 0\\
\begin{tikzpicture}[scale=.2]
	\foreach \x in {0,1,2,3}
		\node (\x) at (\x,0) [inner sep =-2pt] {$\scs\bullet$};
	\draw (0) .. controls (.5,1.25) and (1.5,1.25) .. (2);
	\draw (2) .. controls (2.25,.75) and (2.75,.75) .. (3);
\end{tikzpicture} 
& 0 & 0 & 0 & 0 & 0 & 0 & 0 & 0 &  0 & 0 & 0 & 1  & 0 & 0 & 0\\
\begin{tikzpicture}[scale=.2]
	\foreach \x in {0,1,2,3}
		\node (\x) at (\x,0) [inner sep =-2pt] {$\scs\bullet$};
	\draw (0) .. controls (.5,1.25) and (1.5,1.25) .. (2);
	\draw (1) .. controls (1.5,1.25) and (2.5,1.25) .. (3);
\end{tikzpicture} 
& 0 & 0 & 0 & 0 & 0 & 0 & 0 & 0 & 0 &  0 & 0 & 0 & 1  & 0 & 0\\
\begin{tikzpicture}[scale=.2]
	\foreach \x in {0,1,2,3}
		\node (\x) at (\x,0) [inner sep =-2pt] {$\scs\bullet$};
	\draw (0) .. controls (.75,1.75) and (2.25,1.75) .. (3);
\end{tikzpicture} 
& 0 & 0 & 0 & 0 & 0 & 0 & 0 & 0 & 0 & 0 &  0 & 0 & 0 & 1  & q^{-1}\\
\begin{tikzpicture}[scale=.2]
	\foreach \x in {0,1,2,3}
		\node (\x) at (\x,0) [inner sep =-2pt] {$\scs\bullet$};
	\draw (0) .. controls (.75,1.75) and (2.25,1.75) .. (3);
	\draw (1) .. controls (1.25,.75) and (1.75,.75) .. (2);
\end{tikzpicture}  & 0 & 0 & 0 & 0 & 0 & 0 & 0 & 0 & 0 & 0 & 0 &  0 & 0 & 0 & 1 
\end{array}
$$
\end{example}

The following proposition computes the inverse of this matrix.

\begin{proposition}  \label{kappatorhoinverse}
$$\kappa_\mu=\sum_{\nu\supseteq \mu} \frac{(-1)^{|\nu-\mu|}}{q^{\nst_{\nu-\mu}^{\nu}}} \rho_\nu(q).$$
\end{proposition}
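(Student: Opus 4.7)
The plan is to verify the identity by direct matrix inversion. Substituting the claimed expansion of $\kappa_\mu$ into the definition of $\rho_\nu(q)$ and collapsing the double sum reduces the proposition to proving, for every $\sigma\supseteq\mu$, the identity
\[
\sum_{\mu\subseteq\nu\subseteq\sigma} (-1)^{|\nu-\mu|}\, q^{-\nst^\nu_{\nu-\mu}-\nst^\nu_{\sigma-\nu}} \;=\; \delta_{\sigma,\mu}.
\]
Since $\sigma$ is a set partition and the ``forbidden triples'' condition is inherited by subsets of the arc set, every $\nu$ with $\mu\subseteq\nu\subseteq\sigma$ is automatically a set partition, so I may reparametrize by $T:=\nu-\mu$ ranging freely over all subsets of $S:=\sigma-\mu$.

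The combinatorial heart of the argument is to read the exponent as a count of nested pairs. Both $\nst^\nu_{\nu-\mu}$ and $\nst^\nu_{\sigma-\nu}$ count pairs $(a,b)$ of nested arcs where $a\in\nu$ is the outer arc and $b$ lies, respectively, in $\nu-\mu$ or in $\sigma-\nu$; together the allowed inner arcs are exactly those of $(\nu-\mu)\sqcup(\sigma-\nu)=S$. Splitting the outer arc according to whether $a\in\mu$ (so $a\in\nu$ unconditionally) or $a\in T$ (so $a\in\nu$ only when present in $T$), I get
\[
\nst^\nu_{\nu-\mu}+\nst^\nu_{\sigma-\nu} \;=\; c(\mu,\sigma)+\sum_{a\in T} d_a,
\]
where $c(\mu,\sigma)$ counts nested pairs with outer arc in $\mu$ and inner arc in $S$, and $d_a:=\#\{b\in S: b \text{ is nested strictly inside } a\}$ for $a\in S$.

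With this decomposition the sum factors completely:
\[
q^{-c(\mu,\sigma)}\sum_{T\subseteq S}\prod_{a\in T}\bigl(-q^{-d_a}\bigr) \;=\; q^{-c(\mu,\sigma)}\prod_{a\in S}\bigl(1-q^{-d_a}\bigr).
\]
When $\sigma=\mu$ one has $S=\emptyset$, $c=0$, and the empty product gives $1$. When $\sigma\supsetneq\mu$, I choose an arc $a=i\larc{}l\in S$ of minimum span $l-i$; no other arc $b=j\larc{}k\in S$ can satisfy $i<j<k<l$, so $d_a=0$ and the factor $1-q^{0}=0$ kills the product. The only delicate point is the exponent decomposition in the middle step, which is a careful double-counting of nested pairs according to the three regions $\mu$, $T$, and $S\setminus T$; beyond that bookkeeping I expect no structural obstacle.
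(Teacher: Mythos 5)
Your argument is correct, and while it shares the paper's overall skeleton (verify that the two transition matrices are mutually inverse by showing the composite is the identity, which reduces to a signed orthogonality sum over the interval $[\mu,\sigma]$ in the subset order), the cancellation itself is established by a genuinely different mechanism. The paper picks the arc of $\lambda-\nu$ of \emph{maximal} span and builds a sign-reversing involution, which requires a separate verification that the toggle preserves the $q$-weight. You instead observe that the exponent $\nst^\nu_{\nu-\mu}+\nst^\nu_{\sigma-\nu}$ counts pairs with outer arc in $\nu=\mu\sqcup T$ and inner arc in $S$, hence splits as a $T$-independent constant plus an additive function of $T$; this is correct (the two inner-arc sets $T$ and $S-T$ partition $S$, and strict nesting forbids $a=b$), and it yields the exact product formula $q^{-c(\mu,\sigma)}\prod_{a\in S}(1-q^{-d_a})$ for every entry of the composite matrix, with the \emph{minimal}-span arc of $S$ supplying the vanishing factor since strict nesting forces a strictly smaller span. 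Your route buys a closed-form evaluation of all entries rather than mere term-by-term cancellation, and it sidesteps the weight-preservation check. Two small points: you prove the composition $KR=I$ rather than $RK=I$ as the paper does (you describe the substitution as going into the definition of $\rho_\nu$, but the identity you write is the other composite); this is harmless since the matrices are square and unitriangular. And your reparametrization by arbitrary $T\subseteq S$ is justified exactly as you say, because the defining condition on set partitions is inherited by subsets of the arc set.
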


\begin{proof}
We wish to show that 
\begin{align*}
\rho_\nu(q)&=\sum_{\mu\supseteq \nu}\frac{1}{q^{\nst_{\mu-\nu}^\nu}} \sum_{\lambda\supseteq \mu}\frac{(-1)^{|\lambda-\mu|}}{q^{\nst_{\lambda-\mu}^{\lambda}}} \rho_\lambda(q)\\
&=\sum_{\lambda\supseteq\mu\supseteq \nu}\frac{(-1)^{|\lambda-\mu|}}{q^{\nst_{\mu-\nu}^\nu}q^{\nst_{\lambda-\mu}^{\lambda}}}   \rho_\lambda(q).
\end{align*}
In other words,
$$\sum_{\lambda\supseteq\mu\supseteq \nu}\frac{(-1)^{|\lambda-\mu|}}{q^{\nst_{\mu-\nu}^\nu}q^{\nst_{\lambda-\mu}^{\lambda}}}=\left\{\begin{array}{ll} 1 & \text{if $\lambda=\nu$,}\\ 0 &\text{otherwise.}\end{array}\right.$$
If $\lambda=\nu$, then the sum has one term which is
$$\frac{1}{q^{\nst^\nu_\emptyset}q^{\nst^\nu_{\emptyset}}}=1.$$

Assume $\lambda\neq \nu$.  To establish 
$$ \sum_{\lambda\supseteq\mu\supseteq \nu}\frac{(-1)^{|\lambda-\mu|}}{q^{\nst_{\mu-\nu}^\nu}q^{\nst_{\lambda-\mu}^{\lambda}}}  =0$$
 we define an involution $\iota$ on the set $\{\nu\subseteq \mu\subseteq \lambda\}$ such that 
\begin{enumerate}
\item[(a)] $(-1)^{|\lambda-\iota(\mu)|}=-(-1)^{|\lambda-\mu|}$,
\item[(b)] $q^{\nst_{\iota(\mu)-\nu}^\nu} q^{\nst_{\lambda-\iota(\mu)}^{\lambda}}=q^{\nst_{\mu-\nu}^\nu}  q^{\nst_{\lambda-\mu}^{\lambda}}$.
\end{enumerate}
Let $\alpha=i\larc{}l\in \lambda-\nu$ be maximal with respect to the statistic $l-i$ (the particular choice is irrelevant).
Define the involution by
$$\iota(\mu)=\left\{\begin{array}{ll} \mu\cup\{\alpha\} & \text{if $\alpha\notin \mu$},\\ \mu-\{\alpha\} & \text{if $\alpha\in \mu$}.\end{array}\right.$$
Clearly (a) holds under this involution.  For (b), suppose $\alpha\in \mu$.  then 
\begin{align*}
q^{\nst_{\iota(\mu)-\nu}^{\nu}} & = q^{\nst_{\mu-\nu}^\nu+\#\{i'<i<l<l'\mid i'\slarc{} l'\in \nu\}}\\
&= q^{\nst_{\mu-\nu}^\nu+\#\{i'<i<l<l'\mid i'\slarc{} l'\in \lambda\}}
\end{align*}
by the maximality in the choice of $\alpha$.  On the other hand,
$$
q^{\nst_{\lambda-\iota(\mu)}^{\lambda}}= q^{\nst_{\lambda-\mu}^{\lambda}-\#\{i'<i<l<l'\mid i'\slarc{} l'\in \lambda\}}.
$$
Condition (b)  follows.
\end{proof}

Let $\chi^\lambda_\mu$ denote the value of the supercharacter $\chi^\lambda$ on the superclass indexed by $\mu$.  By Proposition \ref{kappatorhoinverse},
\begin{align*}
\chi^\lambda & = \sum_{\mu} \chi^\lambda_\mu \kappa_\mu\\
& = \sum_{\mu} \chi^\lambda_\mu \sum_{\nu\supseteq \mu} \frac{(-1)^{|\nu-\mu|}}{q^{\nst_{\nu-\mu}^\nu}} \rho_\nu(q)\\
& =\sum_{\nu} \bigg(\sum_{\mu\subseteq \nu} \chi^\lambda_\mu  \frac{(-1)^{|\nu-\mu|}}{q^{\nst_{\nu-\mu}^\nu}}\bigg) \rho_\nu(q).
\end{align*}
We are interested in these coefficients of the $\rho_\nu(q)$.

For $\lambda, \mu\in \cS_n$, let
\begin{align*}
\mathrm{cflt}(\mu)&=\{j\larc{}k\mid \text{there exists $i\larc{}l\in \mu$ with $i=j<k<l$ or $i<j<k=l$}\}\\
\mathrm{snst}^\lambda_\mu&=\#\{i<j<k<l\mid i\larc{}l\in \lambda, j\larc{}k\in \mu-\mathrm{cflt}(\lambda)\}
\end{align*}
be the sets of \emph{arcs conflicting with $\mu$} and the set of \emph{strictly nested pairs}, respectively.

\begin{theorem} \label{CoefficientFormula} For $\lambda,\nu\in \cS_n$,
\begin{align*}
\sum_{\mu\subseteq \nu} & \chi^\lambda_\mu  \frac{(-1)^{|\nu-\mu|}}{q^{\nst_{\nu-\mu}^\nu}}\\
 &= \frac{(-1)^{|\nu|}q^{\dim(\lambda)}(q-1)^{|\lambda-\nu|}}{q^{|\lambda|+\mathrm{snst}_\nu^\lambda+\mathrm{nst}_\nu^\nu}}\bigg(\prod_{i\slarc{}j\in\nu\cap \lambda}\hspace{-.2cm} (q-1)q^{\nst_{i\slarc{}j}^\lambda}+q^{\nst_{i\slarc{}j}^\nu}\bigg) \bigg(\prod_{i\slarc{}j\in\nu- \lambda\atop i\slarc{}j\notin \mathrm{cflt}(\lambda)} \hspace{-.3cm} q^{\nst_{i\slarc{}j}^\lambda}-q^{\nst_{i\slarc{}j}^\nu}\bigg).
\end{align*}
\end{theorem}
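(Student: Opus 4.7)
My plan is to substitute the explicit formula (\ref{SupercharacterFormula}) for $\chi^\lambda_\mu$ and then exhibit the resulting sum as a product indexed by arcs of $\nu$. The first observation is that $\chi^\lambda_\mu$ vanishes unless $\mu \cap \mathrm{cflt}(\lambda) = \emptyset$, so the sum over $\mu \subseteq \nu$ really only ranges over $\mu \subseteq \nu \setminus \mathrm{cflt}(\lambda)$.

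Next, I would partition $\nu = A \sqcup B \sqcup C$ according to the relationship with $\lambda$: set $A = \nu \cap \lambda$, $B = \nu \cap \mathrm{cflt}(\lambda)$, and $C = \nu - (A \cup B)$, which is exactly $\{i\larc{}j \in \nu - \lambda \mid i\larc{}j \notin \mathrm{cflt}(\lambda)\}$, the index set of the second product. A contributing $\mu$ decomposes uniquely as $\mu = \mu_A \sqcup \mu_C$ with $\mu_A \subseteq A$ and $\mu_C \subseteq C$, while arcs of $B$ are forced out. The identities $|\lambda \cap \mu| = |\mu_A|$ and $|\nu - \mu| = |\nu| - |\mu_A| - |\mu_C|$ collapse the signs to $(-1)^{|\nu-\mu|+|\lambda\cap\mu|} = (-1)^{|\nu|}(-1)^{|\mu_C|}$.

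The crux is that the nesting statistics split additively over single arcs: writing $\nst^\tau_\beta$ for the number of arcs of $\tau$ strictly nesting a fixed arc $\beta$, one has $\nst^\lambda_\mu = \sum_{\beta\in\mu}\nst^\lambda_\beta$ and $\nst^\nu_{\nu-\mu} = \sum_{\beta\in\nu-\mu}\nst^\nu_\beta$. This turns each $q$-power into a product over arcs, so the sum over $\mu$ factors completely: each $\beta \in A$ is summed independently over $\{\text{in},\text{out}\}$, contributing $(q-1)^{-1}q^{-\nst^\lambda_\beta}+q^{-\nst^\nu_\beta}$, while each $\beta \in C$ contributes $-q^{-\nst^\lambda_\beta}+q^{-\nst^\nu_\beta}$. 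Multiplying each $A$-factor by $(q-1)q^{\nst^\lambda_\beta+\nst^\nu_\beta}$ and each $C$-factor by $q^{\nst^\lambda_\beta+\nst^\nu_\beta}$ (and compensating in the global prefactor) converts these into precisely the factors $(q-1)q^{\nst^\lambda_\beta}+q^{\nst^\nu_\beta}$ and $q^{\nst^\lambda_\beta}-q^{\nst^\nu_\beta}$ appearing in the theorem.

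It then remains to check that the residual global factor is the claimed prefactor. The rescalings pull out $(q-1)^{|A|}q^{\sum_{A\cup C}(\nst^\lambda_\beta+\nst^\nu_\beta)}$, which combines with the $(-1)^{|\nu|}(q-1)^{|\lambda|}q^{\dim(\lambda)-|\lambda|}q^{-\sum_{\beta\in B}\nst^\nu_\beta}$ coming from $\chi^\lambda_\mu$ via the identities $(q-1)^{|\lambda|-|A|}=(q-1)^{|\lambda-\nu|}$, $\sum_{\beta\in\nu}\nst^\nu_\beta=\nst^\nu_\nu$, and $\sum_{\beta\in A\cup C}\nst^\lambda_\beta=\snst^\lambda_\nu$ (using $A\cup C = \nu\setminus\mathrm{cflt}(\lambda)$ together with the definition of $\snst$). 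The main obstacle is bookkeeping---keeping careful track of how each sign, each $(q-1)$-power, and each $q$-exponent migrates between the global prefactor and the arc-indexed products---rather than any deeper conceptual issue; once the additive decomposition of $\nst$ over single arcs is in hand, the entire sum visibly factors and the rest is algebra.
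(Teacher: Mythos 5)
Your proposal is correct and follows essentially the same route as the paper: both substitute the explicit supercharacter formula, use the additivity of $\nst$ over single arcs to recognize the summand as a product of arc-local factors, and thereby factor the sum over $\mu\subseteq\nu$ into a product over arcs of $\nu$ of two-term sums, with the remaining work being the same prefactor bookkeeping. The only cosmetic difference is that you discard the arcs of $\mathrm{cflt}(\lambda)$ up front via the partition $A\sqcup B\sqcup C$, whereas the paper carries them along as a zero case of its value function $\mathrm{val}^\lambda_\mu$.
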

\begin{proof}  For the purpose of this proof, let $t=q-1$.
First note that by (\ref{SupercharacterFormula}) we have that
\begin{align*}
\chi^\lambda_\mu=q^{\dim(\lambda)-|\lambda|}\bigg(\prod_{i\slarc{}j\in \lambda-\mu}t\bigg)\bigg(\prod_{i\slarc{}j\in \lambda\cap\mu} \frac{-1}{q^{\nst_{i\slarc{}j}^\lambda}}\bigg)\bigg(\prod_{i\slarc{}j\in \mu-\lambda\atop i\slarc{}j\notin \mathrm{cflt}(\lambda)} \frac{1}{q^{\nst_{i\slarc{}j}^\lambda}}\bigg)\bigg(\prod_{i\slarc{}j\in \mu-\lambda\atop i\slarc{}j\in \mathrm{cflt}(\lambda)} 0\bigg),
\end{align*}
Plug into the coefficient formula to get
\begin{align*}
&\sum_{\mu\subseteq \nu}  \chi^\lambda_\mu  \frac{(-1)^{|\nu-\mu|}}{q^{\nst_{\nu-\mu}^\nu}} \\
&=\frac{q^{\dim(\lambda)}}{q^{|\lambda|}} \sum_{\mu\subseteq \nu}\bigg(\prod_{i\slarc{}j\in \lambda-\mu}\hspace{-.15cm}t\bigg)\bigg(\prod_{i\slarc{}j\in \lambda\cap\mu} \frac{-1}{q^{\nst_{i\slarc{}j}^\lambda}}\bigg)\bigg(\prod_{i\slarc{}j\in \mu-\lambda\atop i\slarc{}j\notin \mathrm{cflt}(\lambda)} \frac{1}{q^{\nst_{i\slarc{}j}^\lambda}}\bigg)\bigg(\prod_{i\slarc{}j\in \mu-\lambda\atop i\slarc{}j\in \mathrm{cflt}(\lambda)}\hspace{-.2cm} 0\bigg) \frac{(-1)^{|\nu-\mu|}}{q^{\nst_{\nu-\mu}^\nu}}\\
&= \frac{q^{\dim(\lambda)}}{q^{|\lambda|}} \sum_{\mu\subseteq \nu}\bigg(\prod_{i\slarc{}j\in \lambda-\mu}\hspace{-.15cm} t\bigg)\bigg(\prod_{i\slarc{}j\in \lambda\cap\mu} \frac{-1}{q^{\nst_{i\slarc{}j}^\lambda}}\bigg)\bigg(\prod_{i\slarc{}j\in \mu-\lambda\atop i\slarc{}j\notin \mathrm{cflt}(\lambda)} \frac{1}{q^{\nst_{i\slarc{}j}^\lambda}}\bigg)\bigg(\prod_{i\slarc{}j\in \mu-\lambda\atop i\slarc{}j\in \mathrm{cflt}(\lambda)} \hspace{-.2cm} 0\bigg) \bigg(\prod_{i\slarc{}j\in \nu-\mu} \frac{-1}{q^{\nst_{i\slarc{}j}^\nu}}\bigg)\\
&= \frac{q^{\dim(\lambda)}t^{|\lambda-\nu|}}{q^{|\lambda|}} \\
&\hspace*{.75cm}\cdot\sum_{\mu\subseteq \nu}\bigg(\prod_{i\slarc{}j\in (\lambda\cap\nu)-\mu}\hspace{-.15cm} t\bigg)\bigg(\prod_{i\slarc{}j\in \lambda\cap\mu} \frac{-1}{q^{\nst_{i\slarc{}j}^\lambda}}\bigg)\bigg(\prod_{i\slarc{}j\in \mu-\lambda\atop i\slarc{}j\notin \mathrm{cflt}(\lambda)} \frac{1}{q^{\nst_{i\slarc{}j}^\lambda}}\bigg)\bigg(\prod_{i\slarc{}j\in \mu-\lambda\atop i\slarc{}j\in \mathrm{cflt}(\lambda)} \hspace{-.2cm} 0\bigg) \bigg(\prod_{i\slarc{}j\in \nu-\mu} \frac{-1}{q^{\nst_{i\slarc{}j}^\nu}}\bigg)
\end{align*}
Thus,
$$\sum_{\mu\subseteq \nu}  \chi^\lambda_\mu  \frac{(-1)^{|\nu-\mu|}}{q^{\nst_{\nu-\mu}^\nu}} = \frac{q^{\dim(\lambda)}t^{|\lambda-\nu|}}{q^{|\lambda|}} \sum_{\mu\subseteq \nu}  \prod_{i\slarc{}j\in \nu}\mathrm{val}_\mu^\lambda(i\larc{}j),$$
where
\begin{equation}\label{ValueFunction}
\mathrm{val}_\mu^\lambda(i\larc{}j)=\left\{\begin{array}{ll}
-q^{-\nst^\lambda_{i\slarc{}j}} & \text{if $i\larc{}j\in \lambda\cap\mu$,}\\
q^{-\nst_{i\slarc{} j}^\mu}  & \text{if $i\larc{}j\in \mu-\lambda$, $i\larc{}j\notin \mathrm{cflt}(\lambda)$,}\\
0 & \text{if $i\larc{}j\in \mu-\lambda$, $i\larc{}j\in \mathrm{cflt}(\lambda)$,}\\
-tq^{-\nst_{i\slarc{}j}^\nu} & \text{if $i\slarc{}j\in \lambda-\mu$,}\\
-q^{-\nst_{i\slarc{}j}^\nu} & \text{if $i\slarc{}j\notin \lambda\cup\mu$,}\\

\end{array}\right.
\end{equation}
Fix $k\larc{}l\in \nu$.  Then
\begin{align*}
\sum_{\mu\subseteq \nu} & \chi^\lambda_\mu  \frac{(-1)^{|\nu-\mu|}}{q^{\nst_{\nu-\mu}^\nu}} = q^{\dim(\lambda)-|\lambda|} \bigg(\sum_{\mu\subseteq \nu\atop k\slarc{}l\in \mu}  \prod_{i\slarc{}j\in \nu}\mathrm{val}_\mu^\lambda(i\larc{}j)+\sum_{\mu\subseteq \nu\atop k\slarc{}l\notin \mu}  \prod_{i\slarc{}j\in \nu}\mathrm{val}_\mu^\lambda(i\larc{}j)\bigg)\\ 
&=q^{\dim(\lambda)-|\lambda|} \bigg(\sum_{\mu\subseteq \nu\atop k\slarc{}l\in \mu} \mathrm{val}_\mu^\lambda(k\larc{}l) \prod_{i\slarc{}j\in \nu\atop i\slarc{}j\neq k\slarc{} l}\mathrm{val}_\mu^\lambda(i\larc{}j) + \sum_{\mu\subseteq \nu\atop k\slarc{}l\in \mu} \mathrm{val}_\mu^\lambda(k\larc{}l)  \prod_{i\slarc{}j\in \nu\atop i\slarc{}j\neq k\slarc{} l}\mathrm{val}_\mu^\lambda(i\larc{}j)\bigg)\\
&=q^{\dim(\lambda)-|\lambda|} \bigg(\sum_{\mu\subseteq \nu\atop k\slarc{}l\in \mu} \mathrm{val}_\nu^\lambda(k\larc{}l) \prod_{i\slarc{}j\in \nu\atop i\slarc{}j\neq k\slarc{} l}\mathrm{val}_\mu^\lambda(i\larc{}j) + \sum_{\mu\subseteq \nu\atop k\slarc{}l\in \mu} \mathrm{val}_\emptyset^\lambda(k\larc{}l)  \prod_{i\slarc{}j\in \nu\atop i\slarc{}j\neq k\slarc{} l}\mathrm{val}_\mu^\lambda(i\larc{}j)\bigg).
\end{align*}
Note that
\begin{align*}
\sum_{\mu\subseteq \nu\atop k\slarc{}l\in \mu} \prod_{i\slarc{}j\in \nu\atop i\slarc{}j\neq k\slarc{} l}\mathrm{val}_\mu^\lambda(i\larc{}j) &= \sum_{\mu\subseteq \nu\atop k\slarc{}l\in \mu}   \prod_{i\slarc{}j\in \nu\atop i\slarc{}j\neq k\slarc{} l}\mathrm{val}_\mu^\lambda(i\larc{}j),\\
&=\sum_{\mu\subseteq \nu-\{k\slarc{}l\}}    \prod_{i\slarc{}j\in \nu-\{k\slarc{}l\}}\mathrm{val}_\mu^\lambda(i\larc{}j),
\end{align*}
Thus,
\begin{align*}
\sum_{\mu\subseteq \nu}  \chi^\lambda_\mu  \frac{(-1)^{|\nu-\mu|}}{q^{\nst_{\nu-\mu}^\nu}} & =  q^{\dim(\lambda)-|\lambda|} ( \mathrm{val}_\nu^\lambda(k\larc{}l)+ \mathrm{val}_\emptyset^\lambda(k\larc{}l))\bigg(\sum_{\mu\subseteq \nu-\{k\slarc{}l\}}    \prod_{i\slarc{}j\in \nu-\{k\slarc{}l\}}\mathrm{val}_\mu^\lambda(i\larc{}j)\bigg)\\
&=q^{\dim(\lambda)-|\lambda|} \prod_{i\slarc{}j\in \nu} ( \mathrm{val}_\nu^\lambda(i\larc{}j)+ \mathrm{val}_\emptyset^\lambda(i\larc{}j)),
\end{align*}
where the second equality is obtained by iterating (ie. fix $k'\larc{}l'\in \nu-\{k\larc{}l\}$, etc.).  

By separating into the cases given by (\ref{ValueFunction}), we obtain
\begin{align*}
\sum_{\mu\subseteq \nu} & \chi^\lambda_\mu  \frac{(-1)^{|\nu-\mu|}}{q^{\nst_{\nu-\mu}^\nu}}\\
&= \frac{q^{\dim(\lambda)}t^{|\lambda-\nu|}}{q^{|\lambda|}} \bigg(\prod_{i\slarc{}j\in \nu\cap \lambda} \frac{-1}{q^{\nst^\lambda_{i\slarc{}j}}}+\frac{-t}{q^{\nst_{i\slarc{}j}^\nu}}\bigg)\bigg(\prod_{i\slarc{}j\in \nu-\lambda\atop i\slarc{}j\notin \mathrm{cflt}(\lambda)} \frac{1}{q^{\nst^\lambda_{i\slarc{}j}}}+\frac{-1}{q^{\nst_{i\slarc{}j}^\nu}}\bigg)\bigg(\prod_{i\slarc{}j\in \nu-\lambda\atop i\slarc{}j\in \mathrm{cflt}(\lambda)}\frac{-1}{q^{\nst_{i\slarc{}j}^\nu}}\bigg)\\
&=\frac{(-1)^{|\nu|}q^{\dim(\lambda)}t^{|\lambda-\nu|}}{q^{|\lambda|+\mathrm{snst}_\nu^\lambda+\nst_\nu^\nu}} 
\bigg(\prod_{i\slarc{}j\in \nu\cap \lambda} tq^{\nst^\lambda_{i\slarc{}j}}+q^{\nst_{i\slarc{}j}^\nu}\bigg)\bigg(\prod_{i\slarc{}j\in \nu-\lambda\atop i\slarc{}j\notin \mathrm{cflt}(\lambda)}q^{\nst^\lambda_{i\slarc{}j}}-q^{\nst_{i\slarc{}j}^\nu}\bigg),
\end{align*}
as desired.
\end{proof}

\begin{example}
The transition matrix from the $\chi$-basis to the $\rho$-basis is therefore 
 $$
\begin{array}{c|cc@{}c@{}cccccccccccc}
 & \begin{tikzpicture}[scale=.2]
	\foreach \x in {0,1,2,3}
		\node (\x) at (\x,0) [inner sep =-2pt] {$\scs\bullet$};
\end{tikzpicture} 
& 
\begin{tikzpicture}[scale=.2]
	\foreach \x in {0,1,2,3}
		\node (\x) at (\x,0) [inner sep =-2pt] {$\scs\bullet$};
	\draw (0) .. controls (.25,.75) and (.75,.75) .. (1);
\end{tikzpicture} &
\begin{tikzpicture}[scale=.2]
	\foreach \x in {0,1,2,3}
		\node (\x) at (\x,0) [inner sep =-2pt] {$\scs\bullet$};
	\draw (1) .. controls (1.25,.75) and (1.75,.75) .. (2);
\end{tikzpicture} & 
\begin{tikzpicture}[scale=.2]
	\foreach \x in {0,1,2,3}
		\node (\x) at (\x,0) [inner sep =-2pt] {$\scs\bullet$};
	\draw (2) .. controls (2.25,.75) and (2.75,.75) .. (3);
\end{tikzpicture} 
& 
\begin{tikzpicture}[scale=.2]
	\foreach \x in {0,1,2,3}
		\node (\x) at (\x,0) [inner sep =-2pt] {$\scs\bullet$};
	\draw (0) .. controls (0.25,.75) and (.75,.75) .. (1);
	\draw (1) .. controls (1.25,.75) and (1.75,.75) .. (2);
\end{tikzpicture} 
& 
\begin{tikzpicture}[scale=.2]
	\foreach \x in {0,1,2,3}
		\node (\x) at (\x,0) [inner sep =-2pt] {$\scs\bullet$};
	\draw (0) .. controls (0.25,.75) and (.75,.75) .. (1);
	\draw (2) .. controls (2.25,.75) and (2.75,.75) .. (3);
\end{tikzpicture} 
& 
\begin{tikzpicture}[scale=.2]
	\foreach \x in {0,1,2,3}
		\node (\x) at (\x,0) [inner sep =-2pt] {$\scs\bullet$};
	\draw (1) .. controls (1.25,.75) and (1.75,.75) .. (2);
	\draw (2) .. controls (2.25,.75) and (2.75,.75) .. (3);
\end{tikzpicture} 
& 
\begin{tikzpicture}[scale=.2]
	\foreach \x in {0,1,2,3}
		\node (\x) at (\x,0) [inner sep =-2pt] {$\scs\bullet$};
	\draw (0) .. controls (0.25,.75) and (.75,.75) .. (1);
	\draw (1) .. controls (1.25,.75) and (1.75,.75) .. (2);
	\draw (2) .. controls (2.25,.75) and (2.75,.75) .. (3);
\end{tikzpicture} 
&
\begin{tikzpicture}[scale=.2]
	\foreach \x in {0,1,2,3}
		\node (\x) at (\x,0) [inner sep =-2pt] {$\scs\bullet$};
	\draw (0) .. controls (.5,1.25) and (1.5,1.25) .. (2);
\end{tikzpicture} 
&
\begin{tikzpicture}[scale=.2]
	\foreach \x in {0,1,2,3}
		\node (\x) at (\x,0) [inner sep =-2pt] {$\scs\bullet$};
	\draw (1) .. controls (1.5,1.25) and (2.5,1.25) .. (3);
\end{tikzpicture} 
&
\begin{tikzpicture}[scale=.2]
	\foreach \x in {0,1,2,3}
		\node (\x) at (\x,0) [inner sep =-2pt] {$\scs\bullet$};
	\draw (0) .. controls (0.25,.75) and (.75,.75) .. (1);
	\draw (1) .. controls (1.5,1.25) and (2.5,1.25) .. (3);
\end{tikzpicture} 
&
\begin{tikzpicture}[scale=.2]
	\foreach \x in {0,1,2,3}
		\node (\x) at (\x,0) [inner sep =-2pt] {$\scs\bullet$};
	\draw (0) .. controls (.5,1.25) and (1.5,1.25) .. (2);
	\draw (2) .. controls (2.25,.75) and (2.75,.75) .. (3);
\end{tikzpicture} 
&
\begin{tikzpicture}[scale=.2]
	\foreach \x in {0,1,2,3}
		\node (\x) at (\x,0) [inner sep =-2pt] {$\scs\bullet$};
	\draw (0) .. controls (.5,1.25) and (1.5,1.25) .. (2);
	\draw (1) .. controls (1.5,1.25) and (2.5,1.25) .. (3);
\end{tikzpicture} 
&
\begin{tikzpicture}[scale=.2]
	\foreach \x in {0,1,2,3}
		\node (\x) at (\x,0) [inner sep =-2pt] {$\scs\bullet$};
	\draw (0) .. controls (.75,1.75) and (2.25,1.75) .. (3);
\end{tikzpicture} 
&
\begin{tikzpicture}[scale=.2]
	\foreach \x in {0,1,2,3}
		\node (\x) at (\x,0) [inner sep =-2pt] {$\scs\bullet$};
	\draw (0) .. controls (.75,1.75) and (2.25,1.75) .. (3);
	\draw (1) .. controls (1.25,.75) and (1.75,.75) .. (2);
\end{tikzpicture} \\ \hline
  \begin{tikzpicture}[scale=.2]
	\foreach \x in {0,1,2,3}
		\node (\x) at (\x,0) [inner sep =-2pt] {$\scs\bullet$};
\end{tikzpicture} 
& 1 & 0 & 0 & 0 & 0 & 0 & 0 & 0 & 0 & 0 & 0 & 0 & 0 & 0 & 0 \\
\begin{tikzpicture}[scale=.2]
	\foreach \x in {0,1,2,3}
		\node (\x) at (\x,0) [inner sep =-2pt] {$\scs\bullet$};
	\draw (0) .. controls (.25,.75) and (.75,.75) .. (1);
\end{tikzpicture} 
& t & -q & 0 & 0 & 0 & 0 & 0 & 0 & 0 & 0 & 0 & 0 &  0 & 0 & 0 \\
\begin{tikzpicture}[scale=.2]
	\foreach \x in {0,1,2,3}
		\node (\x) at (\x,0) [inner sep =-2pt] {$\scs\bullet$};
	\draw (1) .. controls (1.25,.75) and (1.75,.75) .. (2);
\end{tikzpicture} 
&  t & 0 & -q & 0 & 0 & 0 & 0 & 0 & 0 & 0  & 0 & 0 & 0 & 0 & 0\\
\begin{tikzpicture}[scale=.2]
	\foreach \x in {0,1,2,3}
		\node (\x) at (\x,0) [inner sep =-2pt] {$\scs\bullet$};
	\draw (2) .. controls (2.25,.75) and (2.75,.75) .. (3);
\end{tikzpicture} 
&  t & 0 & 0  & -q & 0 & 0 & 0 & 0 & 0 &  0 & 0 & 0 & 0 & 0 & 0\\
\begin{tikzpicture}[scale=.2]
	\foreach \x in {0,1,2,3}
		\node (\x) at (\x,0) [inner sep =-2pt] {$\scs\bullet$};
	\draw (0) .. controls (0.25,.75) and (.75,.75) .. (1);
	\draw (1) .. controls (1.25,.75) and (1.75,.75) .. (2);
\end{tikzpicture} 
& t^2 &  -tq & -tq & 0  & q^2 & 0 & 0 & 0 & 0 & 0 &  0 & 0 & 0 & 0 & 0 \\ 
\begin{tikzpicture}[scale=.2]
	\foreach \x in {0,1,2,3}
		\node (\x) at (\x,0) [inner sep =-2pt] {$\scs\bullet$};
	\draw (0) .. controls (0.25,.75) and (.75,.75) .. (1);
	\draw (2) .. controls (2.25,.75) and (2.75,.75) .. (3);
\end{tikzpicture} 
& t^2 & -tq &  0 & -tq & 0  & q^2 & 0 & 0 & 0 & 0 &  0 & 0 & 0 & 0 & 0   \\
\begin{tikzpicture}[scale=.2]
	\foreach \x in {0,1,2,3}
		\node (\x) at (\x,0) [inner sep =-2pt] {$\scs\bullet$};
	\draw (1) .. controls (1.25,.75) and (1.75,.75) .. (2);
	\draw (2) .. controls (2.25,.75) and (2.75,.75) .. (3);
\end{tikzpicture} 
& t^2 & 0 & -tq &  -tq & 0 & 0 & q^2 & 0  & 0 & 0 &  0 & 0 & 0 & 0 & 0 \\
\begin{tikzpicture}[scale=.2]
	\foreach \x in {0,1,2,3}
		\node (\x) at (\x,0) [inner sep =-2pt] {$\scs\bullet$};
	\draw (0) .. controls (0.25,.75) and (.75,.75) .. (1);
	\draw (1) .. controls (1.25,.75) and (1.75,.75) .. (2);
	\draw (2) .. controls (2.25,.75) and (2.75,.75) .. (3);
\end{tikzpicture} 
& t^3 & -t^2q & -t^2q & -t^2q &  tq^2 & tq^2 & tq^2 & -q^3   & 0 & 0 &  0 & 0 & 0 & 0 & 0 \\
\begin{tikzpicture}[scale=.2]
	\foreach \x in {0,1,2,3}
		\node (\x) at (\x,0) [inner sep =-2pt] {$\scs\bullet$};
	\draw (0) .. controls (.5,1.25) and (1.5,1.25) .. (2);
\end{tikzpicture} 
& tq & 0 & 0 & 0 & 0 &  0 & 0 & 0 & -q^2    & 0 &  0 & 0 & 0 & 0 & 0\\
\begin{tikzpicture}[scale=.2]
	\foreach \x in {0,1,2,3}
		\node (\x) at (\x,0) [inner sep =-2pt] {$\scs\bullet$};
	\draw (1) .. controls (1.5,1.25) and (2.5,1.25) .. (3);
\end{tikzpicture} 
& tq & 0 & 0 & 0 & 0 & 0 &  0 & 0 & 0 & -q^2    & 0 &   0 & 0 & 0 & 0\\
\begin{tikzpicture}[scale=.2]
	\foreach \x in {0,1,2,3}
		\node (\x) at (\x,0) [inner sep =-2pt] {$\scs\bullet$};
	\draw (0) .. controls (0.25,.75) and (.75,.75) .. (1);
	\draw (1) .. controls (1.5,1.25) and (2.5,1.25) .. (3);
\end{tikzpicture} 
& t^2q & -tq^2 & 0 & 0 & 0 & 0 & 0 &  0 & 0 & -tq^2 & q^3    &   0 & 0 & 0 & 0\\
\begin{tikzpicture}[scale=.2]
	\foreach \x in {0,1,2,3}
		\node (\x) at (\x,0) [inner sep =-2pt] {$\scs\bullet$};
	\draw (0) .. controls (.5,1.25) and (1.5,1.25) .. (2);
	\draw (2) .. controls (2.25,.75) and (2.75,.75) .. (3);
\end{tikzpicture} 
& t^2q & 0 & 0 & -tq^2 & 0 & 0 & 0 & 0 &  -tq^2 & 0 & 0 & q^3  & 0 & 0 & 0\\
\begin{tikzpicture}[scale=.2]
	\foreach \x in {0,1,2,3}
		\node (\x) at (\x,0) [inner sep =-2pt] {$\scs\bullet$};
	\draw (0) .. controls (.5,1.25) and (1.5,1.25) .. (2);
	\draw (1) .. controls (1.5,1.25) and (2.5,1.25) .. (3);
\end{tikzpicture} 
& t^2q^2 & 0 & 0 & 0 & 0 & 0 & 0 & 0 & -tq^3 &  -tq^3 & 0 & 0 & q^4  & 0 & 0\\
\begin{tikzpicture}[scale=.2]
	\foreach \x in {0,1,2,3}
		\node (\x) at (\x,0) [inner sep =-2pt] {$\scs\bullet$};
	\draw (0) .. controls (.75,1.75) and (2.25,1.75) .. (3);
\end{tikzpicture} 
& tq^2 & 0 & -t^2q & 0 & 0 & 0 & 0 & 0 & 0 & 0 &  0 & 0 & 0 & -q^3  & 0\\
\begin{tikzpicture}[scale=.2]
	\foreach \x in {0,1,2,3}
		\node (\x) at (\x,0) [inner sep =-2pt] {$\scs\bullet$};
	\draw (0) .. controls (.75,1.75) and (2.25,1.75) .. (3);
	\draw (1) .. controls (1.25,.75) and (1.75,.75) .. (2);
\end{tikzpicture}  
& t^2q^2 & 0 & t(q^3-q^2+q) & 0 & 0 & 0 & 0 & 0 & 0 & 0 & 0 &  0 & 0 & -tq^3 & q^3 
\end{array}
$$
\end{example}

As we can see in the above example, the matrix appears to be lower-triangular.   The following defines a total order on $\cS_n$ that makes this clear, while respecting our poset of set partition inclusion.

For $\lambda ,\mu\in \cS_n$, let $\dimv(\lambda)$ be the integer partition given by the multiset $\{l-i\mid i\larc{}l\in \lambda\}$ and $\rnode(\lambda)$ be the integer partition given by the set $\{l\mid i\larc{}l\in \lambda\}$.  For example, 
$$\dimv\left(
\begin{tikzpicture}[scale=.5,baseline=0.25cm]
	\foreach \x in {0,...,5}
		\node (\x) at (\x,0) [inner sep = -1pt] {$\bullet$};
	\draw (0) .. controls (1,2) and (3,2) .. (4);
	\draw (1) .. controls (1.25,.75) and (1.75,.75) .. (2);
	\draw (2) .. controls (2.25,.75) and (2.75,.75) .. (3);
	\draw (3) .. controls (3.5,1.25) and (4.5,1.25) .. (5);
\end{tikzpicture}\right)
=(4,2,1,1)\qquad \text{and}\qquad
\rnode\left(
\begin{tikzpicture}[scale=.5,baseline=0.25cm]
	\foreach \x in {0,...,5}
		\node (\x) at (\x,0) [inner sep = -1pt] {$\bullet$};
	\draw (0) .. controls (1,2) and (3,2) .. (4);
	\draw (1) .. controls (1.25,.75) and (1.75,.75) .. (2);
	\draw (2) .. controls (2.25,.75) and (2.75,.75) .. (3);
	\draw (3) .. controls (3.5,1.25) and (4.5,1.25) .. (5);
\end{tikzpicture}\right)=(6,5,4,3).$$
Note that $\dim(\lambda)=|\dimv(\lambda)|$ or the size of the corresponding integer partition.

Define a total order $\leq$ on $\cS_n$ by
\begin{enumerate}
\item[(a)] $\lambda\geq \mu$ if $\dimv(\lambda)\geq_{\text{lex}} \dimv(\mu)$, where $\geq_{\text{lex}}$ is the biggest part to smallest part lexicographic order on integer partitions, and 
\item[(b)] If $\dimv(\lambda)= \dimv(\mu)$, then $\rnode(\lambda)\geq_{\text{lex}} \rnode(\mu)$.
\end{enumerate}
For example, for $n=4$ we have in increasing order,
$$\begin{array}{|c|c|c|c|c|c|} \hline
\lambda & 
\begin{tikzpicture}[scale=.5]
	\foreach \x in {0,1,2}
		\node (\x) at (\x,0) [inner sep =-2pt] {$\bullet$};
\end{tikzpicture}
&
\begin{tikzpicture}[scale=.5]
	\foreach \x in {0,1,2}
		\node (\x) at (\x,0)  [inner sep =-2pt]  {$\bullet$};
		\draw (0) .. controls (.25,.5) and (.75,.5) .. (1);
\end{tikzpicture}
&
\begin{tikzpicture}[scale=.5]
	\foreach \x in {0,1,2}
		\node (\x) at (\x,0)  [inner sep =-2pt]  {$\bullet$};
		\draw (1) .. controls (1.25,.5) and (1.75,.5) .. (2);
\end{tikzpicture} 
&
\begin{tikzpicture}[scale=.5]
	\foreach \x in {0,1,2}
		\node (\x) at (\x,0)  [inner sep =-2pt]  {$\bullet$};
		\draw (0) .. controls (.25,.5) and (.75,.5) .. (1);
		\draw (1) .. controls (1.25,.5) and (1.75,.5) .. (2);
\end{tikzpicture} &
\begin{tikzpicture}[scale=.5]
	\foreach \x in {0,1,2}
		\node (\x) at (\x,0)  [inner sep =-2pt]  {$\bullet$};
		\draw (0) .. controls (.5,1) and (1.5,1) .. (2);
\end{tikzpicture}\\ \hline
\dimv(\lambda) & \emptyset & (1) & (1) & (1,1) & (2) \\ \hline
\rnode(\lambda) & \emptyset & (2) & (3) & (3,2) & (3) \\ \hline
 \end{array}$$
 This is also the order used in all of our $n=4$ transition matrices above.
 
 \begin{remark}
 For our purposes any poset that respects the poset obtained by using (a) above is sufficient.  We add (b) only to get a total order.
 \end{remark}
 
 \begin{corollary}  Let $\nu,\lambda\in \cS_n$.  If $\nu>\lambda$, then 
$$\sum_{\mu\subseteq\nu} \chi^\lambda_\mu  \frac{(-1)^{|\nu-\mu|}}{q^{\nst_{\nu-\mu}^\nu}}=0.$$
\end{corollary}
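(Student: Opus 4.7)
The plan is to deduce the corollary directly from Theorem \ref{CoefficientFormula}. Its formula presents the coefficient as a manifestly nonzero prefactor times
\[
\bigg(\prod_{i\slarc{}j\in\nu\cap \lambda}\big((q-1)q^{\nst_{i\slarc{}j}^\lambda}+q^{\nst_{i\slarc{}j}^\nu}\big)\bigg)\bigg(\prod_{i\slarc{}j\in\nu- \lambda,\ i\slarc{}j\notin \mathrm{cflt}(\lambda)}\big(q^{\nst_{i\slarc{}j}^\lambda}-q^{\nst_{i\slarc{}j}^\nu}\big)\bigg).
\]
Each factor $(q-1)q^a+q^b$ in the first product is a nonzero polynomial in $q$, so the whole expression vanishes precisely when some factor in the second product is zero, i.e.\ precisely when there exists a \emph{witness arc} $i\larc{}j\in\nu-\lambda$ with $i\larc{}j\notin\mathrm{cflt}(\lambda)$ and $\nst_{i\slarc{}j}^\lambda=\nst_{i\slarc{}j}^\nu$ (recall this superscript counts the arcs of the upper partition strictly containing $i\larc{}j$). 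The task therefore reduces to producing such a witness whenever $\nu>\lambda$.

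My candidate is the arc $\alpha=i\larc{}l\in\nu-\lambda$ with $l-i$ maximal. It exists because $\nu\subseteq\lambda$ would make $\dimv(\lambda)$ obtained from $\dimv(\nu)$ by inserting additional entries, hence $\dimv(\lambda)\geq_\mathrm{lex}\dimv(\nu)$, forcing $\lambda\geq\nu$ and contradicting $\nu>\lambda$.

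The main obstacle is verifying that $\alpha$ is a witness, and I plan to argue by contradiction. Suppose instead that either $\alpha\in\mathrm{cflt}(\lambda)$ or $\nst_\alpha^\lambda\neq\nst_\alpha^\nu$. In the first case a $\lambda$-arc $\beta$ shares an endpoint with $\alpha$ and extends strictly beyond it, so $|\beta|>|\alpha|$; since $\alpha\in\nu$ already occupies the shared endpoint and a set partition is a matching with unique left- and right-endpoints, $\beta$ cannot also lie in $\nu$. In the second case, the maximality of $|\alpha|$ in $\nu-\lambda$ gives $\nst_\alpha^\nu\leq\nst_\alpha^\lambda$ (every $\nu$-arc strictly containing $\alpha$ has size greater than $|\alpha|$, hence lies in $\nu\cap\lambda\subseteq\lambda$), and strict inequality supplies a $\lambda$-arc $\beta$ strictly containing $\alpha$ with $\beta\notin\nu$, so again $|\beta|>|\alpha|$. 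Either way, $\lambda-\nu$ contains an arc of size exceeding $|\alpha|$.

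Finally I would extract a contradiction with $\nu>\lambda$ by comparing dimension vectors. Setting $A_d(\mu):=\#\{\gamma\in\mu:|\gamma|\geq d\}$, the maximality of $|\alpha|$ in $\nu-\lambda$ forces every $\nu$-arc of size $>|\alpha|$ to lie in $\nu\cap\lambda$, so $A_d(\nu)\leq A_d(\lambda)$ for all $d>|\alpha|$, and the presence of $\beta$ gives $A_{|\beta|}(\nu)<A_{|\beta|}(\lambda)$. Using the standard equivalence between lex-order on sorted integer sequences and the corresponding profile of $A_d$'s at the largest disagreement, this yields $\dimv(\lambda)>_\mathrm{lex}\dimv(\nu)$ and hence $\lambda>\nu$, the required contradiction.
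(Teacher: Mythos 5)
Your proof is correct and follows essentially the same route as the paper: extract from Theorem \ref{CoefficientFormula} that it suffices to exhibit an arc $i\larc{}j\in\nu-\lambda$ with $i\larc{}j\notin\mathrm{cflt}(\lambda)$ and $q^{\nst_{i\slarc{}j}^\lambda}-q^{\nst_{i\slarc{}j}^\nu}=0$, and take as witness an arc $\alpha$ of $\nu-\lambda$ of maximal length. If anything, your execution is more careful than the paper's own: by demanding only $\nst_{\alpha}^\lambda=\nst_{\alpha}^\nu$ rather than that both vanish, and by explicitly ruling out $\alpha\in\mathrm{cflt}(\lambda)$, you correctly cover cases such as $\lambda=\{1\larc{}4\}$, $\nu=\{1\larc{}4,2\larc{}3\}$, where the maximal arc $2\larc{}3$ of $\nu-\lambda$ has $\nst_{2\slarc{}3}^\lambda=1\neq 0$, so the paper's asserted conclusion that this quantity is zero fails even though the corresponding factor still vanishes.
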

\begin{proof}
Suppose there exists $j\larc{}k\in \nu-(\lambda\cup\mathrm{cflt}(\lambda))$ such that $\nst_{j\slarc{}k}^\lambda=0$.  Pick such an arc maximal with respect to $k-j$. If $\nst_{j\slarc{}k}^\nu\neq 0$, then by the maximality of $j\larc{}k$ there would exist $i\slarc{}l\in \nu\cap\mathrm{cflt}(\lambda)$ such that $i<j<k<l$.  However, if $i\larc{}l\in \mathrm{cflt}(\lambda)$, then there exists $i'\larc{}l'\in\lambda$ such that $i'=i<l<l'$ or $i'<i<l=l'$, contradicting $\nst_{j\slarc{}k}^\lambda=0$.  Thus, $\nst_{j\slarc{}k}^\nu= 0$. We can conclude that 
$$q^{\nst_{i\slarc{}j}^\lambda}-q^{\nst_{i\slarc{}j}^\nu}=0,$$
so our sum is zero if there exists $j\larc{}k\in \nu-(\lambda\cup\mathrm{cflt}(\lambda))$ such that $\nst_{j\slarc{}k}^\lambda=0$.

Suppose $\nu>\lambda$.  Then there exists $j\larc{}k\in \nu-\lambda$ maximal with respect to $k-j$.  If $\nst_{j\slarc{}k}^\lambda\neq 0$, then there would exist $i\larc{}l\in \lambda$ with $i<j<k<l$.  However, the maximality of our choice now contradicts $\lambda>\mu$.  Thus, $\nst_{j\slarc{}k}^\lambda= 0$, and our coefficient is 0, as desired.
\end{proof}

Furthermore, the nonzero coefficients are polynomials in $q$ with integer coefficients.

 \begin{corollary} For $\nu,\lambda\in \cS_n$,
$$\sum_{\mu\subseteq\nu} \chi^\lambda_\mu  \frac{(-1)^{|\nu-\mu|}}{q^{\nst_{\nu-\mu}^\nu}}\in \ZZ[q].$$
\end{corollary}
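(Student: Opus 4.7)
The plan is to apply Theorem \ref{CoefficientFormula} and verify that the apparent denominator divides the numerator in $\ZZ[q]$. Theorem \ref{CoefficientFormula} expresses the coefficient as
$$\frac{(-1)^{|\nu|}q^{\dim(\lambda)}(q-1)^{|\lambda-\nu|}}{q^{|\lambda|+\mathrm{snst}_\nu^\lambda+\nst_\nu^\nu}}\left(\prod_{i\slarc{}j\in\nu\cap\lambda}\bigl((q-1)q^{\nst_{i\slarc{}j}^\lambda}+q^{\nst_{i\slarc{}j}^\nu}\bigr)\right)\left(\prod_{\substack{i\slarc{}j\in\nu-\lambda\\ i\slarc{}j\notin\mathrm{cflt}(\lambda)}}\bigl(q^{\nst_{i\slarc{}j}^\lambda}-q^{\nst_{i\slarc{}j}^\nu}\bigr)\right),$$
whose numerator is manifestly a polynomial in $\ZZ[q]$, since each factor is. It therefore suffices to show that the $q$-adic valuation of the numerator at $q=0$ is at least $|\lambda|+\mathrm{snst}_\nu^\lambda+\nst_\nu^\nu$.

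I would split into cases. If any factor $q^{\nst_{i\slarc{}j}^\lambda}-q^{\nst_{i\slarc{}j}^\nu}$ in the second product vanishes, i.e.\ $\nst_{i\slarc{}j}^\lambda=\nst_{i\slarc{}j}^\nu$ for some $i\slarc{}j\in(\nu-\lambda)\setminus\mathrm{cflt}(\lambda)$, the entire coefficient is $0\in\ZZ[q]$ and we are done. Otherwise, a short direct calculation shows that each factor of the first product has $q$-valuation at least $\min(\nst_{i\slarc{}j}^\lambda,\nst_{i\slarc{}j}^\nu)$ (with valuation $\nst_{i\slarc{}j}^\lambda+1$ in the boundary case $\nst_{i\slarc{}j}^\lambda=\nst_{i\slarc{}j}^\nu$), and each nonzero factor of the second product has valuation exactly $\min(\nst_{i\slarc{}j}^\lambda,\nst_{i\slarc{}j}^\nu)$. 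After substituting $\min(a,b)=a+b-\max(a,b)$ and using that $(\nu\cap\lambda)\cup((\nu-\lambda)\setminus\mathrm{cflt}(\lambda))=\nu\setminus\mathrm{cflt}(\lambda)$, the divisibility reduces to the combinatorial inequality
$$\dim(\lambda)-|\lambda|\;\geq\;\sum_{i\slarc{}j\in\nu\setminus\mathrm{cflt}(\lambda)}\max(\nst_{i\slarc{}j}^\lambda,\nst_{i\slarc{}j}^\nu)\;+\;\sum_{i\slarc{}j\in\nu\cap\mathrm{cflt}(\lambda)}\nst_{i\slarc{}j}^\nu.$$

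To prove this inequality, I would read the left side as the cardinality of the set of pairs $(a\slarc{}b\in\lambda,\,k)$ with $a<k<b$, that is, interior points of $\lambda$-arcs. My plan is to construct an injection from the weighted right-hand set into these pairs. For each $i\slarc{}j\in\nu$ and each $A=a\slarc{}b\in\lambda$ with $a<i<j<b$, send it to $(A,i)$; the injectivity across distinct $\nu$-arcs is immediate because left endpoints in a set partition are all distinct. The excess contributions when $\nst_{i\slarc{}j}^\nu>\nst_{i\slarc{}j}^\lambda$ (forcing $\max=\nst^\nu$) are absorbed by routing through the right endpoint $j$ or through interior slots of a further-nesting $\lambda$-arc that the hypothesis $\prod_2\neq 0$ provides. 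Contributions from $i\slarc{}j\in\nu\cap\mathrm{cflt}(\lambda)$ are handled by the definition of $\mathrm{cflt}(\lambda)$, which guarantees a $\lambda$-arc sharing an endpoint with $i\slarc{}j$ whose interior supplies the needed fresh slot.

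The main obstacle is checking that this injection is globally well-defined across the three classes of contributions simultaneously; concretely, one must verify that the interior slots chosen from different $\nu$-arcs never collide. I expect the decisive tool to be the set-partition property that each point is the left endpoint of at most one arc in $\nu$, and similarly for right endpoints and for $\lambda$, combined in the delicate case with the standing hypothesis $\prod_2\neq 0$ that forces strict inequality $\nst_{i\slarc{}j}^\lambda\neq\nst_{i\slarc{}j}^\nu$ and thus creates the slack needed for the injection.
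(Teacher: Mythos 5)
Your reduction is sound as far as it goes: starting from Theorem \ref{CoefficientFormula}, computing the $q$-valuations of the factors $(q-1)q^{a}+q^{b}$ and $q^{a}-q^{b}$, and rearranging via $\min(a,b)=a+b-\max(a,b)$ does correctly convert the integrality claim into the inequality
$$\dim(\lambda)-|\lambda|\;\geq\;\sum_{i\slarc{}j\in\nu\setminus\mathrm{cflt}(\lambda)}\max(\nst_{i\slarc{}j}^\lambda,\nst_{i\slarc{}j}^\nu)\;+\;\sum_{i\slarc{}j\in\nu\cap\mathrm{cflt}(\lambda)}\nst_{i\slarc{}j}^\nu,$$
under the standing hypothesis that no factor of the second product vanishes. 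The genuine gap is that this inequality is never proved. The easy half of your injection (charging the nesting of a $\nu$-arc $i\larc{}j$ inside a $\lambda$-arc $A$ to the slot $(A,i)$) only yields $\sum_{i\slarc{}j\in\nu}\nst^\lambda_{i\slarc{}j}\leq\dim(\lambda)-|\lambda|$. All of the difficulty sits in the excess terms: when $\nst^\nu_{i\slarc{}j}>\nst^\lambda_{i\slarc{}j}$ (so the max is the $\nu$-count), and for arcs in $\nu\cap\mathrm{cflt}(\lambda)$, the required fresh interior slots of $\lambda$-arcs are not exhibited but only asserted to exist via ``routing'' and ``absorption.'' Note that the hypothesis $\nst^\lambda_{i\slarc{}j}\neq\nst^\nu_{i\slarc{}j}$ cuts both ways: it permits $\nst^\lambda<\nst^\nu$, which is exactly the regime where $\lambda$ supplies fewer containing arcs than $\nu$ does, so it does not by itself create slack on the $\lambda$ side; one would have to propagate the non-vanishing condition up the chain of $\nu$-arcs containing $i\larc{}j$ to manufacture the needed $\lambda$-arcs, and none of that is carried out. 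You flag this yourself as ``the main obstacle,'' so as written this is a plausible plan with its hardest step deferred, not a proof.

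For comparison, the paper takes a blunter and much shorter route: it ignores the two products entirely (each factor is visibly in $\ZZ[q]$) and shows that the monomial prefactor alone satisfies $q^{\dim(\lambda)}/q^{|\lambda|+\mathrm{snst}^\lambda_\nu+\nst^\nu_\nu}\in\ZZ[q]$, by bounding $\mathrm{snst}^\lambda_\nu\leq(\dim(\lambda)-|\lambda|)/2$ and $\nst^\nu_\nu\leq(\dim(\nu)-|\nu|)/2$ and invoking the preceding corollary to restrict attention to $\nu\leq\lambda$. If you want to rescue your approach, either prove your per-arc inequality in full (it is sharper than what the paper needs, and the extra $+1$ of valuation you noted in the boundary case of the first product may be required slack) or retreat to an aggregate bound of the paper's type.
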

\begin{proof}
By Theorem \ref{CoefficientFormula}, it suffices to show that
$$\frac{q^{\dim(\lambda)}}{q^{|\lambda|+\mathrm{snst}_\nu^\lambda+\nst_\nu^\nu}}\in \ZZ[q].$$
Note that any arc $i\larc{}l$ can have at most $\lfloor\frac{l-i-1}{2}\rfloor$ arcs nested in it, so
$$\mathrm{snst}^{\lambda}_\nu\leq \sum_{i\slarc{}l\in \lambda} \lfloor\frac{l-i-1}{2}\rfloor\leq \frac{\dim(\lambda)-|\lambda|}{2} \qquad \text{and}\qquad \nst^{\nu}_\nu\leq \sum_{i\slarc{}l\in \nu} \lfloor\frac{l-i-1}{2}\rfloor\leq \frac{\dim(\nu)-|\nu|}{2}. $$
However, the coefficient is zero if $\nu>\lambda$, so 
$$\mathrm{snst}^{\lambda}_\nu +  \nst^{\nu}_\nu + |\lambda|\leq\frac{\dim(\lambda)-|\lambda|}{2} +  \frac{\dim(\nu)-|\nu|}{2} + |\lambda| \leq \dim(\lambda),$$
as desired.
\end{proof}

There are many specializations of Theorem \ref{CoefficientFormula}.   For example, as entries of a $|\cS_n|\times |\cS_n|$ matrix, we could consider the diagonal entries, as in the following corollary.

\begin{corollary} For $\lambda\in \cS_n$, 
$$\sum_{\mu\subseteq \lambda} \chi^\lambda_\mu  \frac{(-1)^{|\lambda-\mu|}}{q^{ \nst_{\lambda-\mu}^\lambda}}=(-1)^{|\lambda|}q^{\dim(\lambda)-\nst_\lambda^\lambda}.$$
\end{corollary}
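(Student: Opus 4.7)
My plan is to derive this corollary as a direct specialization of Theorem~\ref{CoefficientFormula} to the diagonal case $\nu=\lambda$, with the only nontrivial observation being an identity between two nesting statistics that holds in that special case.

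First, I would set $\nu = \lambda$ in the formula of Theorem~\ref{CoefficientFormula} and simplify the easy factors: the prefactor $(q-1)^{|\lambda-\nu|}$ becomes $1$ since $\lambda-\nu=\emptyset$, and the second product over $i\larc{}j\in \nu-\lambda\setminus \mathrm{cflt}(\lambda)$ is empty and hence equals $1$. The first product ranges over $i\larc{}j\in \nu\cap\lambda=\lambda$, and since $\nu=\lambda$ we have $\nst^{\lambda}_{i\slarc{}j}=\nst^{\nu}_{i\slarc{}j}$, so each factor collapses to $(q-1)q^{\nst^{\lambda}_{i\slarc{}j}}+q^{\nst^{\lambda}_{i\slarc{}j}}=q\cdot q^{\nst^{\lambda}_{i\slarc{}j}}$. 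Taking the product over all arcs of $\lambda$ gives $q^{|\lambda|+\nst^{\lambda}_{\lambda}}$, using $\sum_{i\slarc{}j\in\lambda}\nst^{\lambda}_{i\slarc{}j}=\nst^{\lambda}_{\lambda}$. Assembled with the prefactor $(-1)^{|\lambda|}q^{\dim(\lambda)}/q^{|\lambda|+\mathrm{snst}^{\lambda}_{\lambda}+\nst^{\lambda}_{\lambda}}$, everything cancels except $(-1)^{|\lambda|}q^{\dim(\lambda)-\mathrm{snst}^{\lambda}_{\lambda}}$.

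The remaining step, which is the only real content beyond bookkeeping, is to show $\mathrm{snst}^{\lambda}_{\lambda}=\nst^{\lambda}_{\lambda}$. By definition the first statistic counts the nestings $i<j<k<l$ with $i\larc{}l\in\lambda$ and $j\larc{}k\in\lambda\setminus\mathrm{cflt}(\lambda)$, whereas the second has no such $\mathrm{cflt}$ restriction, so it suffices to show that any arc $j\larc{}k\in\lambda$ that is strictly nested inside some arc of $\lambda$ cannot lie in $\mathrm{cflt}(\lambda)$. If $j\larc{}k\in\mathrm{cflt}(\lambda)$, then some $i'\larc{}l'\in\lambda$ satisfies $i'=j<k<l'$ or $i'<j<k=l'$; either possibility would force two arcs of $\lambda$ sharing an endpoint and sitting strictly inside one another, violating the set partition axiom $i\larc{}k\in\lambda\Rightarrow i\larc{}j,j\larc{}k\notin\lambda$ for $i<j<k$.

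The main (mild) obstacle is exactly this last combinatorial identity $\mathrm{snst}^{\lambda}_{\lambda}=\nst^{\lambda}_{\lambda}$: one needs to use the set partition axiom in the right direction to rule out conflicts, and it is easy to slip on which endpoint is shared. Everything else is a direct specialization and collection of exponents.
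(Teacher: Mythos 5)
Your proof is correct and follows essentially the same route as the paper's: specialize Theorem~\ref{CoefficientFormula} to $\nu=\lambda$, note the second product is empty and $(q-1)^{|\lambda-\nu|}=1$, and collapse the first product to $q^{|\lambda|+\nst^\lambda_\lambda}$. The one point you spell out that the paper leaves implicit is the identity $\mathrm{snst}^\lambda_\lambda=\nst^\lambda_\lambda$, and your justification is sound: the set partition axiom forbids two arcs of $\lambda$ from sharing a left endpoint or a right endpoint, so $\lambda\cap\mathrm{cflt}(\lambda)=\emptyset$ and the two statistics coincide.
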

\begin{proof}
This follows directly from Theorem \ref{CoefficientFormula}  and the observation that 
$$\prod_{i\slarc{}j\in\lambda\cap \nu} (q-1)q^{\nst_{i\slarc{}j}^\lambda}+q^{\nst_{i\slarc{}j}^\nu}=\prod_{i\slarc{}j\in\lambda} (q-1)q^{\nst_{i\slarc{}j}^\lambda}+q^{\nst_{i\slarc{}j}^\lambda}=q^{\nst_\lambda^\lambda+|\lambda|}.\qedhere$$
\end{proof}

\section{Consequences}

One of the most immediate consequences of Theorem \ref{CoefficientFormula} is that the $\rho$ basis gives an $LU$-decomposition of the supercharacter table of $\UT_n(q)$ (That is, a product of an upper-triangular matrix  and a lower triangular matrix).

\begin{corollary}
The supercharacter table $C$ of $\UT_n(q)$ has a factorization
$$C=AB$$
where $A$ is a lower-triangular matrix with entries in $\ZZ[q]$ and $B$ is an upper-triangular with entries in $\ZZ[q^{-1}]$. 
\end{corollary}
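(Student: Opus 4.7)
The plan is to read off the $LU$-decomposition directly from the two change-of-basis matrices already computed in Section~\ref{sec:qpowersums}. Writing
\[
\chi^\lambda=\sum_\nu A_{\lambda,\nu}\,\rho_\nu(q), \qquad \rho_\nu(q)=\sum_\mu B_{\nu,\mu}\,\kappa_\mu,
\]
and composing with the expansion $\chi^\lambda=\sum_\mu \chi^\lambda_\mu\kappa_\mu$ of a supercharacter in the superclass basis immediately gives $C=AB$. It thus suffices to check that $A$ is lower-triangular with entries in $\ZZ[q]$ and that $B$ is upper-triangular with entries in $\ZZ[q^{-1}]$, when rows and columns are ordered by the total order $\leq$ on $\cS_n$ introduced just above.

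For the matrix $B$: by the definition of $\rho_\nu(q)$, the entry $B_{\nu,\mu}$ equals $q^{-\nst_{\mu-\nu}^\nu}$ if $\mu\supseteq\nu$ and $0$ otherwise, so all entries lie in $\ZZ[q^{-1}]$. To see upper-triangularity, I would verify that $\mu\supsetneq\nu$ forces $\mu>\nu$ in the total order: the multiset $\dimv(\mu)$ is obtained from $\dimv(\nu)$ by inserting the lengths of the extra arcs into the weakly decreasing sequence, so either some entry of $\dimv(\mu)$ is strictly larger than the corresponding entry of $\dimv(\nu)$, or $\dimv(\nu)$ is a strict prefix of $\dimv(\mu)$; in both cases $\dimv(\mu)\geq_{\text{lex}}\dimv(\nu)$, with equality only when $\mu=\nu$. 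Hence $B_{\nu,\mu}=0$ whenever $\mu<\nu$.

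For the matrix $A$: its entries are precisely the coefficients
\[
A_{\lambda,\nu}=\sum_{\mu\subseteq\nu}\chi^\lambda_\mu\,\frac{(-1)^{|\nu-\mu|}}{q^{\nst_{\nu-\mu}^\nu}}
\]
analyzed in Theorem~\ref{CoefficientFormula}. The first corollary following that theorem gives the vanishing $A_{\lambda,\nu}=0$ whenever $\nu>\lambda$, which is exactly lower-triangularity of $A$ in the chosen order, and the second corollary gives $A_{\lambda,\nu}\in\ZZ[q]$. Putting the two pieces together yields the required factorization $C=AB$.

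The only step involving genuine argument rather than invocation of already-proved results is the lex-order comparison $\mu\supsetneq\nu\Rightarrow \mu>\nu$ underlying the triangularity of $B$; the remainder is a direct bookkeeping consequence of Theorem~\ref{CoefficientFormula} and its corollaries, so no substantive obstacle is expected.
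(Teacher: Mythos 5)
Your proposal is correct and is essentially the argument the paper intends: the paper presents this corollary as an immediate consequence of Theorem~\ref{CoefficientFormula}, its two corollaries, and the definition of $\rho_\nu(q)$, which is exactly the bookkeeping you carry out. Your extra verification that $\mu\supsetneq\nu$ forces $\dimv(\mu)>_{\mathrm{lex}}\dimv(\nu)$ (note the two multisets cannot be equal since $|\mu|>|\nu|$) is the point the paper disposes of by asserting that the total order respects set-partition inclusion, so you have simply made explicit what the paper leaves implicit.
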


We expect that interesting applications will come from such a result.  For now, we have a combinatorial formula for the determinant of the supercharacter table.

\begin{corollary}
The supercharacter table $C$ of $\UT_n(q)$ has a determinant
$$\det(C)=(-1)^{\sum_{\lambda\in \cS_n} |\lambda|} q^{\sum_{\lambda\in \cS_n} \dim(\lambda) -\nst_\lambda^\lambda}.$$
\end{corollary}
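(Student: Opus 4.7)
The plan is to exploit the $LU$-decomposition $C = AB$ guaranteed by the previous corollary and compute determinants of the triangular factors directly. Let $A$ denote the transition matrix from the $\chi$-basis to the $\rho$-basis (with rows indexed by $\lambda$, columns by $\nu$), and let $B$ denote the transition matrix from the $\rho$-basis to the $\kappa$-basis. Composing these transitions recovers the supercharacter table, so $C = AB$, and hence $\det(C) = \det(A)\det(B)$.

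First I would observe that $B$ is upper-triangular with $1$'s on the diagonal. Indeed, the definition $\rho_\nu(q)=\sum_{\mu\supseteq \nu} q^{-\nst^\nu_{\mu-\nu}}\kappa_\mu$ shows $B_{\nu\mu}$ is zero unless $\mu\supseteq\nu$, and one checks that $\mu\supseteq\nu$ forces $\dimv(\mu)\geq_{\mathrm{lex}}\dimv(\nu)$ (each new arc of length $b$ only inserts into $\dimv(\nu)$ in a position that cannot decrease the lex-value), so $\mu\geq\nu$ in the total order. Moreover $B_{\nu\nu}=q^{-\nst^\nu_{\emptyset}}=1$. Consequently $\det(B)=1$.

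Next, the previous two corollaries together say that $A$ is lower-triangular in our total order with diagonal entries
\[
A_{\lambda\lambda}=\sum_{\mu\subseteq \lambda}\chi^\lambda_\mu\frac{(-1)^{|\lambda-\mu|}}{q^{\nst^\lambda_{\lambda-\mu}}}=(-1)^{|\lambda|}q^{\dim(\lambda)-\nst_\lambda^\lambda}.
\]
Therefore
\[
\det(A)=\prod_{\lambda\in\cS_n}A_{\lambda\lambda}=(-1)^{\sum_{\lambda\in\cS_n}|\lambda|}\,q^{\sum_{\lambda\in\cS_n}(\dim(\lambda)-\nst_\lambda^\lambda)}.
\]

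Combining $\det(C)=\det(A)\det(B)=\det(A)$ yields the claimed formula. There is essentially no obstacle: all substantive work was done in Theorem~\ref{CoefficientFormula} and its diagonal specialization. The only point requiring a moment's care is verifying that $B$ is genuinely upper-triangular in the chosen total order on $\cS_n$; this is a short combinatorial check about how adding an arc affects $\dimv$ lexicographically, and it fits naturally before the determinant computation.
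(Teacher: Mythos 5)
Your proof is correct and is exactly the argument the paper intends: the corollary follows from $C=AB$ with $\det(B)=1$ (the $\rho$-to-$\kappa$ matrix is unitriangular since $q^{-\nst^\nu_\emptyset}=1$) and $\det(A)=\prod_\lambda(-1)^{|\lambda|}q^{\dim(\lambda)-\nst^\lambda_\lambda}$ from the diagonal-entry corollary. The paper leaves this proof implicit, and your only added content --- checking that $\mu\supseteq\nu$ implies $\mu\geq\nu$ in the chosen total order --- is a reasonable and correct piece of diligence.
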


It is somewhat of a surprise that the sequences (which we have added to Sloane)
\begin{align*}
\dim(n)&=\sum_{\lambda\in \cS_n} \dim(\lambda)\qquad \qquad \text{[Sloane A200580]}\\
\mathrm{arcs}(n) & = \sum_{\lambda\in \cS_n} |\lambda| \qquad \qquad \qquad \text{[Sloane A200660]}\\
\nst(n) & = \sum_{\lambda\in \cS_n} \nst^\lambda_\lambda \qquad \qquad \quad\, \text{[Sloane A200673]}
\end{align*}
did not appear to be in the literature (or at least not in the Sloane integer sequences database).   However, the first two at least do appear to be related to the  recursive two-variable array (Sloane A011971), known as Aitken's array, given by
$$b[n,k]=\left\{\begin{array}{ll} \#\{\lambda\in \cS_n\mid k\larc{}n\in \lambda\} & \text{if $1\leq k<n$,}\\  \#\{\lambda\in \cS_n\mid j\larc{}n\notin \lambda, 1\leq j<n\}, & \text{if $k=n$.}\end{array}\right. $$
This sequence satisfies the recursion
\begin{align*}
b[1,1] & =1\\
b[n,1] & = b[n-1,n-1]\\
b[n,k] & =b[n,k-1]+b[n-1,k-1],
\end{align*}
and looks like
$$\begin{array}{c@{}c@{}c@{}c@{}c@{}c@{}c@{}c@{}c}  
&&  & & 1 & & && \\
& & & 1& & 2 &  &&\\
&&  2 & &  3 &  & 5 &&\\
& 5 && 7 && 10 && 15& \\
15 && 20 && 27&& 37 && 52
 \end{array}$$
 Note that the Bell numbers appear on the boundary of this triangle.  
 
\begin{proposition}
For $n\in \ZZ_{\geq 1},$
\begin{align*}
\mathrm{arcs}(n) &=  \sum_{k=1}^{n-1} k \cdot b[n,k]\\
\dim(n) &= \sum_{k=1}^{n-1} k (n-k) \cdot b[n,k].
\end{align*}
\end{proposition}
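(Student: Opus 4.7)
The plan is to reduce both identities to a single observation: for $1 \leq i < j \leq n$, the quantity
$$N(n, i, j) \;:=\; \#\{\lambda \in \cS_n \mid i \larc{} j \in \lambda\}$$
depends only on $n$ and the gap $g := j - i - 1$. I will write $N_g(n)$ for this common value. The proof of the observation is a routine block decomposition: any set partition containing the arc $i \larc{} j$ is determined by its block $B \ni i, j$, which must avoid $\{i+1, \ldots, j-1\}$ (this automatically forces $i$ to be the immediate predecessor of $j$ in $B$), together with an arbitrary set partition of the complement $\{1, \ldots, n\} \setminus B$. The count only sees the size $g$ of the forbidden range and the size $n - g - 2$ of the region $\{1,\ldots,i-1\}\cup\{j+1,\ldots,n\}$ from which the remaining elements of $B$ may be drawn; explicitly, summing over $|B| = k+2$ gives $N_g(n) = \sum_{k \geq 0} \binom{n-g-2}{k} B_{n-k-2}$, but only the dependence on $(n,g)$ matters for what follows.

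Given this observation, both identities follow by exchanging the order of summation and a simple change of variable. Double-counting pairs $(\lambda, i \larc{} j)$ with $\lambda \in \cS_n$ and $i \larc{} j \in \lambda$, and then grouping by gap (there are exactly $n - g - 1$ pairs $(i,j)$ with $j - i - 1 = g$), yields
$$\mathrm{arcs}(n) \;=\; \sum_{1 \leq i < j \leq n} N(n, i, j) \;=\; \sum_{g=0}^{n-2} (n - g - 1)\, N_g(n).$$
Weighting each arc instead by its length $j - i = g + 1$ gives
$$\dim(n) \;=\; \sum_{1 \leq i < j \leq n} (j - i)\, N(n, i, j) \;=\; \sum_{g=0}^{n-2} (n - g - 1)(g + 1)\, N_g(n).$$

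On the other side of the identities, $b[n, k] = N(n, k, n) = N_{n-k-1}(n)$ by the definition of $b[n,k]$ and the lemma. Substituting $g = n - k - 1$ (so $k = n - g - 1$ and $n - k = g + 1$) converts $\sum_{k=1}^{n-1} k \cdot b[n,k]$ into $\sum_{g=0}^{n-2} (n - g - 1)\, N_g(n)$ and $\sum_{k=1}^{n-1} k(n-k) \cdot b[n,k]$ into $\sum_{g=0}^{n-2} (n - g - 1)(g + 1)\, N_g(n)$. Comparison with the two displays above proves both statements simultaneously. The only real content lies in the initial observation about the dependence of $N(n,i,j)$ on $(n,g)$ alone, and that step is elementary; no clever bijection is required.
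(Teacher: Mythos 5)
Your proof is correct and follows essentially the same route as the paper's: both double-count the pairs $(\lambda,\, i\larc{}j)$ with $i\larc{}j\in\lambda$ and reduce to the fact that $\#\{\lambda\in\cS_n\mid i\larc{}j\in\lambda\}$ depends only on $n-j+i$, hence equals $b[n,n-j+i]$, after which the change of variables is immediate. The only difference is that you supply the block-decomposition justification of this key fact, which the paper states without proof.
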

\begin{proof}
Consider first 
\begin{equation*}
\mathrm{arcs}(n) =  \sum_{\lambda\in \cS_n} |\lambda| =\sum_{1\leq i<j\leq n}  \#\{\lambda\in \cS_n\mid i\larc{}j\in \cS_n\}.
\end{equation*}
However,  
$$\#\{\lambda\in \cS_n\mid i\larc{}j\in \cS_n\} = b[n,n-j+i],$$
so
$$\mathrm{arcs}(n) =\sum_{1\leq i<j\leq n}  b[n,n-j+i]=\sum_{k=1}^{n-1} k b[n,k].$$
The argument for $\dim(n)$ is similar, but as we enumerate the arcs $i\larc{}j$, we add the statistic $j-i$.
\end{proof}

To understand the sequence $\nst(n)$ in a similar way, we need to define a slight variation of the sequence $b[n,k]$ given by
$$b[n,k,j]=\left\{\begin{array}{ll} 
\#\{\lambda\in \cS_n\mid j\larc{}n,k\larc{}n-1\in \lambda\} & \text{if $j<k<n-1$,}\\ 
 \#\{\lambda\in \cS_n\mid j\larc{}n\in \lambda,  i\larc{}n-1\notin \lambda, 1\leq i <n-1\} & \text{if $j<k=n-1$.}
\end{array}\right.$$
These numbers also satisfy a recursion given by
\begin{align*}
b[3,2,1] & = 1\\
b[n,2,1] &= b[n-1,n-2,1]\\
b[n,j+1,j]&=b[n,j+1,j-1]+b[n-1,j,j-1]\\
b[n,k,j] & = b[n,k-1,j]+b[n-1,k-1,j]
\end{align*}

\begin{proposition} For $n\in \ZZ_{\geq 1},$
$$\nst(n)=\sum_{j=1}^{n-3}\sum_{k=j+1}^{n-2}  j (k-j) b[n,k,j].$$
\end{proposition}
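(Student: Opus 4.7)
The plan is to mimic the double-counting arguments used just above for $\mathrm{arcs}(n)$ and $\dim(n)$. First, exchange the order of summation:
\begin{align*}
\nst(n)
&=\sum_{\lambda\in\cS_n}\#\{i<j<k<l\mid i\larc{}l,\,j\larc{}k\in\lambda\}\\
&=\sum_{1\le i<j<k<l\le n}\#\bigl\{\lambda\in\cS_n\mid i\larc{}l,\,j\larc{}k\in\lambda\bigr\},
\end{align*}
so the proposition reduces to a count of pairs of nested arcs across all set partitions.

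The central step is to establish the identity
\[
\#\bigl\{\lambda\in\cS_n\mid i\larc{}l,\,j\larc{}k\in\lambda\bigr\}
= b\bigl[n,\,n-1-(k-j),\,n-(l-i)\bigr]
\]
for any $1\le i<j<k<l\le n$. This is the two-nested-arc analogue of the identity $\#\{\lambda\mid i\larc{}j\in\lambda\}=b[n,n-j+i]$ used in the preceding proposition; it says the inner count depends only on the spans $l-i$ and $k-j$. I would prove it by a bijection that slides the nested pair to the canonical position in which the outer arc becomes $(n-(l-i))\larc{}n$ and the inner arc becomes $(n-1-(k-j))\larc{}(n-1)$. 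The four special points partition $[n]$ into five ordered gaps of sizes $(i-1,\,j-i-1,\,k-j-1,\,l-k-1,\,n-l)$ in the source configuration and $(n-(l-i)-1,\,(l-i)-(k-j)-2,\,k-j-1,\,0,\,0)$ in the canonical one, both summing to $n-4$; the bijection transports the block structure on the free elements, together with the data of which blocks are anchored to $i,j,k,l$ via arcs, from one layout to the other.

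Granting the identity, the counting is routine. Fix $(j^\ast,k^\ast)$ with $1\le j^\ast<k^\ast\le n-2$ and set $l-i=n-j^\ast$, $k-j=n-1-k^\ast$. Then $1\le i$ and $l\le n$ give $i\in\{1,\dots,j^\ast\}$ ($j^\ast$ choices), while $i<j$ and $k<l$ force $j\in\{i+1,\dots,i+(k^\ast-j^\ast)\}$ ($k^\ast-j^\ast$ choices). Hence each value of $b[n,k^\ast,j^\ast]$ is contributed by exactly $j^\ast(k^\ast-j^\ast)$ quadruples, and the asserted formula follows.

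The main obstacle is the sliding bijection. Because the block containing $i$ and $l$ may also contain elements below $i$ and above $l$, and the block containing $j$ and $k$ may contain elements in any of the five gaps other than the one strictly between $j$ and $k$, the rearrangement must respect both the partition into blocks and which blocks carry arcs to the four marked points. A clean way to formalize the shift is to encode each source gap as an ordered list of ``block-labels decorated with arc-anchor flags'', concatenate the five lists, and then redistribute them into the three nonempty canonical gaps of the correct sizes; once the encoding is specified, injectivity and surjectivity are immediate, and the rest of the proof is the elementary counting above.
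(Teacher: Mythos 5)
The paper states this proposition without proof, so you are filling a real gap; your argument follows exactly the template of the paper's proof for $\mathrm{arcs}(n)$ and $\dim(n)$: exchange the order of summation, show the inner count depends only on the spans, and count multiplicities. The reduction, the identity $\#\{\lambda\mid i\larc{}l,\,j\larc{}k\in\lambda\}=b[n,n-1-(k-j),n-(l-i)]$, and the final multiplicity $j^\ast(k^\ast-j^\ast)$ are all correct (I checked the identity and the resulting formula for $n=4,5$), and since $k^\ast\le n-2$ you always land in the first clause of the definition of $b[n,k,j]$, so the boundary case never intervenes.

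The one step that needs tightening is the bijection. As written, ``concatenate the five lists and redistribute into the three nonempty canonical gaps'' is ambiguous, and a literal concatenate-then-cut would send some elements of the gap $(i,j)$ into the free region of the canonical configuration, destroying the condition that they avoid the outer block; your ``arc-anchor flags'' also add unnecessary complications, since arcs are not stable under relabelling. The clean route is to translate the arc conditions into conditions on the equivalence relation: $i\larc{}l\in\lambda$ means $i\sim_\lambda l$ and no element of $(i,l)$ lies in the block of $i$, and similarly for $j\larc{}k$. Hence membership in the set being counted depends only on $\sim_\lambda$ and on the three subsets $A_{\mathrm{free}}=[1,i)\cup(l,n]$, $A_{\mathrm{mid}}=(i,j)\cup(k,l)$, $A_{\mathrm{in}}=(j,k)$, of sizes $n-(l-i)-1$, $(l-i)-(k-j)-2$, $(k-j)-1$ respectively. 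Any bijection of $[n]$ sending $(i,j,k,l)$ to $(i',j',k',l')$ and each of these three subsets onto its counterpart --- which exists precisely because the spans, and hence these three sizes, agree --- pushes the equivalence relation forward and gives the required bijection with no further bookkeeping. With that substitution your proof is complete.
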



\end{document}